\definecolor{darkblue}{rgb}{0,0.1,.5}
\theoremstyle{plain}
\newtheorem{theorem}{Theorem}[section]
\newtheorem{lemma}[theorem]{Lemma}
\newtheorem{proposition}[theorem]{Proposition}
\newtheorem{corollary}[theorem]{Corollary}
\theoremstyle{definition}
\newtheorem{definition}[theorem]{Definition}
\newtheorem{example}[theorem]{Example}
\newtheorem{conjecture}[theorem]{Conjecture}
\newtheorem{construction}[theorem]{Construction}
\theoremstyle{remark}
\newtheorem*{remark}{Remark}
\numberwithin{equation}{section}
\def \begineq{\begin{equation}}
\def \endeq{\end{equation}}
\def \bb{\mathbb}
\def \N{{\bb{N}}}
\def \Z{{\bb{Z}}}
\def \Q{{\bb{Q}}}
\def \R{{\bb{R}}}
\def \Tor{\mathop{\mathrm{Tor}}\nolimits}
\def\Bier{\mathrm{Bier}}
\def\pol{\mathrm{pol}}
\def\M{\mathrm{max}}
\def\MF{\mathrm{min}}
\newcommand{\vc}{\mathrm{vc}}
\newcommand{\sk}{\mathrm{sk}}
\newcommand{\del}{\mathrm{del}}
\newcommand{\link}{\mathrm{link}}
\def\zk{\mathcal Z_K}
\DeclareMathAlphabet{\mathbbmsl}{U}{bbm}{m}{sl}
\title[Simplicial vs. cubical spheres, polyhedral products and the Nevo-Petersen conjecture]{Simplicial vs. cubical spheres, polyhedral products\\ and the Nevo-Petersen conjecture}
\author[Limonchenko]{Ivan Limonchenko}
\address{Mathematical Institute of the Serbian Academy of Sciences
and Arts (SASA), Belgrade, Serbia}
\email{ivan.limoncenko@turing.mi.sanu.ac.rs}
\author[\v{Z}ivaljevi\'c]{Rade \v{Z}ivaljevi\'c}
\address{Mathematical Institute of the Serbian Academy of Sciences
and Arts (SASA), Belgrade, Serbia}
\email{rade@turing.mi.sanu.ac.rs}
\subjclass[2020]{13F55, 55N10, 55S20, 57S12}
\keywords{Bier sphere, Murai sphere, polyhedral product, face ring, nestohedron, truncation polytope}
\begin{document}

\maketitle

\begin{abstract}
We prove that a Murai sphere is flag if and only if it is a nerve complex of a flag nestohedron and classify all the polytopes arising in this way. Our classification implies that flag Murai spheres satisfy the Nevo-Petersen conjecture on $\gamma$-vectors of flag homology spheres. We continue by showing that a Bier sphere is minimally non-Golod if and only if it is a nerve complex of a truncation polytope different from a simplex and classify all the polytopes arising in this way. Finally, the notion of a cubical Bier sphere is introduced based on the polyhedral product construction, and we study combinatorial and geometrical properties of these cubical complexes.
\end{abstract}

\section{Introduction}

This paper is devoted to the combinatorial, algebraic and topological aspects related to Bier spheres and their generalizations. Since 1992, when a beautiful and simple construction of a PL-sphere being the deleted join of a simplicial complex different from a simplex and its Alexander dual was introduced in~\cite{Bier}, the class of these Bier spheres was studied  intensively, see~\cite{Matousek} and several generalizations of the original construction were proposed. Among those relevant to our paper, we mention here the generalizations obtained in the framework of topological combinatorics~\cite{JNPZ} (Bier complexes), poset theory~\cite{BPSZ05} (Bier posets) and combinatorial commutative algebra~\cite{Mu} (generalized Bier spheres, or Murai spheres).  

It turned out that Bier spheres and their generalizations carry a rich combinatorial structure and enjoy a few important properties like being shellable and edge decomposable, making them a useful tool in topological combinatorics~\cite{Matousek},  polytope theory~\cite{Zivaljevic21}, optimization theory~\cite{Zivaljevic19}, game theory~\cite{TZJ} and combinatorial commutative algebra~\cite{HK, Mu}. At the same time, the ideas, constructions and results from all those areas of research enriched the theory of Bier spheres. Indeed, methods of geometric combinatorics and combinatorial commutative algebra yield new proofs of the fact that Bier's construction produces PL-spheres, see~\cite{dL} and Gorenstein complexes, see~\cite{Mu}, methods of homological algebra allowed to compute certain bigraded Betti numbers of Bier spheres, see~\cite{HK}, and methods of topological combinatorics provided a construction of a canonical complete simplicial fan for each Bier sphere therefore showing that all Bier spheres are starshaped, see~\cite{Zivaljevic19}. 

Recently, in~\cite{LS} the theory of Bier spheres was put in the context of toric topology~\cite{BP02, TT}. In particular, it was shown that all Bier spheres have maximal possible Buchstaber numbers, canonical quasitoric manifolds and small covers over all 2-dimensional Bier spheres were constructed. On the other hand, the classification type problems were in the spotlight of~\cite{LS}: all the 13 different combinatorial types of two-dimensional Bier spheres were identified and, since the ``Simplicial Steiniz Problem'' has a positive solution in dimension two, all the 13 corresponding 3-dimensional convex simple polytopes were also described. It turned out that all those polytopes admit Delzant realizations~\cite[Construction 2.1.3]{TT}, all but one are nestohedra~\cite{FS, P05} and some of them are generalized truncation polytopes~\cite{L14}.    

In this paper we establish new links between the theory of Bier spheres and geometric combinatorics, combinatorial commutative algebra, and toric topology. Two types of problems are considered here. 

The first type of a question we are concerned with is the classification problem. In~\cite{HK} a classification of all flag Bier spheres was obtained. Recall that a simplicial complex is called \emph{flag} if its minimal non-faces have cardinality not greater than two. In Section 3, we provide a different proof of this result and  
show that all flag Bier spheres $\Bier(K)$ are polytopal and the corresponding simple polytopes $P_K$ are flag nestohedra. 

Recall that an $n$-dimensional \emph{nestohedron} is a convex simple polytope equal to the Minkowski sum of simplices, generated by standard basis vectors in $\R^{n+1}$ that are indexed by elements of a \emph{building set} on $[n+1]:=\{1,2,\ldots,n+1\}$, see~\cite[Definition 1.5.10]{TT}. The construction of a nestohedron corresponding to a connected building set as a result of cutting faces of a simplex can be found in~\cite[Theorem 1.5.18]{TT}. 

We show that the following classification statement holds.

\begin{theorem}
A Bier sphere $\Bier(K)$ is flag if and only if it is polytopal with $P_K$ being a flag nestohedron of one of the following types:
\begin{enumerate}
    \item $P_K=I^n$ for $n\geq 1$;
    \item $P_K=I^n\times P_5$ for $n\geq 0$;
    \item $P_K=I^n\times P_6$ for $n\geq 0$;
    \item $P_K=I^n\times Q_2^3$ for $n\geq 0$. 
\end{enumerate}
\end{theorem}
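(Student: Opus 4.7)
The plan is to reduce the flagness of $\Bier(K)$ to a combinatorial condition on $K$, peel off cube factors via a cone-point reduction, and then classify the irreducible cores that remain.

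First, a subset of $[n]\sqcup[n]$ is a face of $\Bier(K)$ iff it has the form $\sigma\sqcup\tau$ with $\sigma\in K$, $\tau\in K^\vee$, and $\sigma\cap\tau=\emptyset$ as subsets of $[n]$. Reading off the minimal non-faces, they come in three flavours: minimal non-faces of $K$ in the first copy, minimal non-faces of $K^\vee$ in the second copy, and the diagonal pairs $\{i,i'\}$. Since the diagonals already have cardinality $2$, $\Bier(K)$ is flag iff both $K$ and $K^\vee$ are flag. Via Alexander duality, the facets of $K^\vee$ are the complements of the minimal non-faces of $K$, so this condition is equivalent to: \emph{$K$ is a flag complex on $[n]$ all of whose facets have cardinality at least $n-2$}.

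Next I would peel off the $I$-factors of $P_K$ by a cone-point reduction. If $v\in[n]$ is a cone point of $K$, then $K=v*L$ for a smaller complex $L$ on $[n]\setminus\{v\}$. A direct calculation from the definitions yields $K^\vee=v*L^\vee$ and then $\Bier(v*L)=S^0*\Bier(L)$, so $P_{v*L}=I\times P_L$. Both the flagness and the large-facet conditions pass down to $L$, so the reduction iterates until the remaining complex is coneless. It then remains to classify coneless flag complexes $K$ on $[n]$ with facets of size $\geq n-2$, up to the combinatorial type of $\Bier(K)$. This is naturally done through the complement graph $\bar G$ of the $1$-skeleton of $K$, whose edges are precisely the $2$-element minimal non-faces of $K$: the coneless hypothesis translates into $\bar G$ having no isolated vertex, while the large-facet hypothesis translates into every maximal independent set of $\bar G$ having co-size $\leq 2$. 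A careful graph-theoretic case analysis then shows that such $\bar G$'s exist only on at most $4$ vertices, and a direct computation of $\Bier(K)$ for a representative of each case shows that, aside from the degenerate case $|V(K)|=2$ giving $P_K=I$, only three non-trivial polytopes arise: $P_5$, $P_6$, and $Q_2^3$. For each core I would exhibit an explicit building set realising $P_K$ as a nestohedron, which proves polytopality.

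The main obstacle is the coneless classification: the assertion that no valid $\bar G$ occurs on $5$ or more vertices requires ruling out all candidate graphs via a careful examination of their maximal independent sets. A subtlety is that on $4$ vertices one finds three non-isomorphic coneless $K$ — the matching $M_2$, the clique complex of a path, and the clique complex of a $4$-cycle — which all give the same Bier sphere, namely the nerve of $Q_2^3$ (rather than of a product $P_6\times I$). The converse direction of the theorem, that each listed polytope yields a flag Bier sphere, then follows from the fact that the flag property is preserved under joins with $S^0$, equivalently under products with $I$.
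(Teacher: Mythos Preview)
Your proposal is correct and follows essentially the same route as the paper: both reduce via ``$\Bier(K)$ flag $\Leftrightarrow$ $K$ and $K^\vee$ flag'' and the suspension identity $\Bier(v\ast L)\cong\partial\diamondsuit^1\ast\Bier(L)$ to a classification of the coneless cores, which are then identified in low dimension (the paper cites the prior classification of $2$-dimensional Bier spheres from \cite{LS} rather than recomputing). For the key step that no coneless core exists on $m\geq 5$ vertices the paper gives a short direct argument that avoids your proposed case analysis on $\bar G$: pick a facet $\sigma$ of size $m-2$, let $p,q$ be the two remaining vertices, and use pigeonhole on the edge counts to locate a vertex of $\sigma$ adjacent to both $p$ and $q$, which is then a cone apex because $K$ is flag. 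One caveat worth patching: your translation ``coneless $\Leftrightarrow$ $\bar G$ has no isolated vertex'' tacitly assumes $K$ has no ghost vertices, since a ghost vertex is isolated in $\bar G$ without being a cone point; the paper handles ghost vertices in a separate short paragraph, and you should do likewise (in fact one checks easily that a coneless flag $K$ on $[m]$ with all facets of size $\geq m-2$ and a ghost vertex forces $m\leq 3$, so only the base cases are affected).
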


In the above theorem, $P_m$ for $m\geq 3$ denotes the $m$-gon and $Q_2^3$ denotes the 3-dimensional simple polytope, which can be obtained from the 3-dimensional cube $I^3$ by two adjacent edge cuts.
 
The theory of \emph{2-truncated cubes}, i.e. simple polytopes combinatorially equivalent to the ones obtained from a cube by cutting off its codimension two faces by hyperplanes in general position was developed in~\cite{BV} and applied to toric topology in~\cite{L16, BL}. This theory shows that a nestohedron is flag if and only if it is a 2-truncated cube. E.g. a building set for $Q_2^3$ can be found in~\cite[Theorem 2.16]{LS}. 
The theory of 2-truncated cubes together with the main result of~\cite{AV} show that the following Nevo-Petersen conjecture holds for all flag Bier spheres. 

\begin{conjecture}[\cite{NP}]
If $K$ is a flag homology sphere, then its $\gamma$-vector $\gamma(K)$ is the $f$-vector of a flag simplicial complex.    
\end{conjecture}

In Section 4 we generalize the above classification result to Murai spheres. It turns out that all flag Murai spheres $\Bier_c(M)$ are also polytopal and the corresponding simple polytopes $P_{M,c}$ are the same flag nestohedra as in the previous theorem. Namely, the next statement holds. 

\begin{theorem}
A Murai sphere $\Bier_c(M)$ is flag if and only if it is polytopal with $P_{M,c}$ being a flag nestohedron of one of the following types:
\begin{enumerate}
    \item $P_{M, c}=I^n$ for $n\geq 1$;
    \item $P_{M, c}=I^n\times P_5$ for $n\geq 0$;
    \item $P_{M, c}=I^n\times P_6$ for $n\geq 0$;
    \item $P_{M, c}=I^n\times Q_2^3$ for $n\geq 0$.
\end{enumerate}
\end{theorem}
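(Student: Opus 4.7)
The plan is to parallel the proof of Theorem 1.1 for Bier spheres, exploiting the fact that the candidate list of flag polytopes is identical in both settings. The argument splits into two implications, of which the converse is the substantial one.

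For the ``if'' direction, the key observation is that every classical Bier sphere arises as a Murai sphere $\Bier_c(M)$ for a canonical (degenerate) choice of the parameter $c$. Consequently, the nestohedral realizations of $I^n$, $I^n\times P_5$, $I^n\times P_6$, and $I^n\times Q_2^3$ as boundary complexes of flag Bier spheres, already exhibited in Theorem 1.1, automatically serve as realizations by flag Murai spheres, providing the desired polytopes $P_{M,c}$.

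For the ``only if'' direction, I would suppose $\Bier_c(M)$ is flag and describe its minimal non-faces explicitly in terms of $(M, c)$. Some minimal non-faces are inherited from minimal non-faces of $M$, while others are produced by the ``doubled'' vertex construction governed by the threshold vector $c$. Demanding that every minimal non-face have cardinality two forces $M$ itself to be flag and severely restricts $c$: only very special parameter values avoid introducing missing faces of size three or more. A finite case analysis of the admissible pairs $(M, c)$ should then identify $\Bier_c(M)$ with some flag Bier sphere $\Bier(K)$, and Theorem 1.1 applied to $K$ classifies the corresponding nestohedron $P_{M,c} = P_K$ as one of the four listed types.

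The main obstacle I anticipate is precisely this reduction step, namely showing that flagness collapses any nontrivial $c$-shift to the Bier regime. The crux is a combinatorial argument ruling out the possibility that a genuinely ``Murai'' choice of $c$ produces a flag sphere outside the Bier list, by verifying that any such $c$ either forces a minimal non-face of cardinality at least three or yields a complex combinatorially isomorphic to an honest $\Bier(K)$. Once this collapse is established, Theorem 1.1 together with the realization direction above completes the classification with no new polytopal constructions required in the Murai setting.
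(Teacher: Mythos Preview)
Your plan is correct and aligns closely with the paper's own proof. The paper executes precisely the strategy you outline: it uses the explicit description of the Stanley--Reisner ideal of $\Bier_c(M)$ (Theorem~\ref{FaceIdealMuraiThm}) to constrain the minimal non-faces, carries out the finite case analysis you anticipate on the admissible pairs $(M,c)$ with $|c|>m$ in Lemma~\ref{FlagMuraiMainCaseLemma}, and then in the proof of Theorem~\ref{FlagMuraiPolytopeClassificationThm} computes the Stanley--Reisner ideal in each of the resulting five or six cases to recognise $\Bier_c(M)$ as an iterated suspension of a small Bier sphere (hence itself a Bier sphere via Lemma~\ref{BierBasicPropertiesLemma}(c)), at which point Theorem~\ref{FlagBierPolytopeClassificationThm} finishes.

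One subtlety worth flagging in your sketch: the generators $\pol(x_i^{c_i+1})$ of degree $c_i+1$ in the Stanley--Reisner ideal do \emph{not} immediately force $c_i\le 1$, because the ideal generators listed in Theorem~\ref{FaceIdealMuraiThm} need not be minimal non-faces --- they can be absorbed by smaller generators coming from $\pol(I_c(M))$ or $\pol^\ast(I_c(M^\vee))$. This is exactly why the case analysis in Lemma~\ref{FlagMuraiMainCaseLemma} still permits a single $c_i\in\{2,3\}$ (with all other $c_j=1$), and why that lemma is the genuine workhorse rather than a triviality. Your proposal correctly locates this as the ``main obstacle,'' but the mechanism by which nontrivial $c$ survives is a bit more delicate than a direct bound on the $c_i$.
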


It follows immediately that the Nevo-Petersen conjecture holds for all generalized Bier spheres.

\begin{corollary}
The Nevo-Petersen conjecture holds in the class of flag Murai spheres.
\end{corollary}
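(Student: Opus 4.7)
The plan is to deduce the corollary as a direct consequence of Theorem 2 together with the chain of results (Buchstaber--Volodin, Aisbett--Volodin) already invoked in the paragraph following Theorem 1; essentially no new work beyond the classification is required.

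First I would take an arbitrary flag Murai sphere $\Bier_c(M)$. By Theorem 2 it is polytopal, with dual simple polytope $P_{M,c}$ belonging to one of the four families $I^n$, $I^n\times P_5$, $I^n\times P_6$, $I^n\times Q_2^3$. Each listed polytope is a flag nestohedron, so by the Buchstaber--Volodin characterization of flag nestohedra as $2$-truncated cubes, $P_{M,c}$ is a $2$-truncated cube. The main result of Aisbett--Volodin asserts precisely that the $\gamma$-vector of the boundary sphere of the dual of any $2$-truncated cube is the $f$-vector of a flag simplicial complex. Applied to $P_{M,c}$ this is exactly the Nevo--Petersen conclusion for $\Bier_c(M)$, completing the proof.

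As a self-contained alternative (useful as a sanity check), I would exploit multiplicativity of the $\gamma$-polynomial under simplicial joins, together with the fact that the nerve of a product of simple polytopes is the join of their nerves. This reduces the conjecture to verifying it on each factor appearing in the four families: the nerve of $I^n$ is the boundary of the $n$-crosspolytope, whose $\gamma$-polynomial is $1$; the nerves of $P_5$, $P_6$ are one-dimensional complexes whose $\gamma$-vectors $(1,\gamma_1)$ are trivially $f$-vectors of flag graphs on $\gamma_1$ isolated vertices; and for $Q_2^3$ the nerve is a small $2$-sphere whose $\gamma$-vector can be written down by hand and realized as the $f$-vector of a flag complex.

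The main obstacle in the whole programme is Theorem 2 itself, where the classification of flag Murai spheres is established; once that theorem is in hand the corollary is formal. Accordingly, in the writeup I would keep the proof of the corollary to a few lines, citing Theorem 2, \cite{BV} (flag nestohedra $=$ $2$-truncated cubes) and \cite{AV} (Nevo--Petersen for $2$-truncated cubes), and I would not repeat the multiplicativity argument unless the reader is expected to want the concrete flag complexes witnessing $\gamma(\Bier_c(M))$.
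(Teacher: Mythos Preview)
Your proposal is correct and follows essentially the same route as the paper: the paper's proof simply refers back to the Bier-sphere corollary, which in turn cites the classification theorem, \cite{BV} (flag nestohedra are $2$-truncated cubes), and \cite{AV} (Nevo--Petersen for $2$-truncated cubes). Your additional multiplicativity sanity check is extra but harmless.
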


In Section 5 we continue working on the classification problem and discuss \emph{Koszul homology} (cohomology algebra of the Koszul d.g.a.) of face rings $\Bbbk[\Bier(K)]$ of Bier spheres $\Bier(K)$. These studies were initiated in~\cite{HK}, where certain \emph{bigraded Betti numbers} of Bier spheres, i.e. ranks of bigraded components of the corresponding Koszul homology algebras were computed. Koszul homology of a face ring known in toric topology as the (multigraded) \emph{Tor-algebra} $\Tor_{\Bbbk[m]}(\Bbbk[K],\Bbbk)$ of a simplicial complex $K$ over a commutative ring with a unit $\Bbbk$ became one of the key objects of study in the theory of polyhedral products~\cite{BBCG} due to the fundamental result~\cite[Theorem 4.5.4]{TT}. It establishes an isomorphism of $\Bbbk$-algebras between the cohomology algebra $H^*(\zk;\Bbbk)$ of a moment-angle-complex $\zk$ and the Koszul homology of the corresponding face ring $\Bbbk[K]$.

We are interested in the following two important algebraic properties of a face ring and therefore topological properties of the corresponding moment-angle-complex. The first notion dates back to~\cite{G}, where a class of local rings with rational Poincar\'e series was identified. Let $K$ be a simplicial complex with $m$ vertices. Its face ring $\Bbbk[K]$ is called \emph{Golod} if multiplication and all higher Massey products vanish in $\Tor^{+}_{\Bbbk[m]}(\Bbbk[K],
\Bbbk)$. The second notion was introduced in~\cite{BJ} in the framework of combinatorial commutative algebra. The ring $\Bbbk[K]$ is called \emph{minimally non-Golod} if it is not Golod and the ring $\Bbbk[\del_K(i)]$ is Golod for each $i\in [m]$. Here $\del_K(i)$ is the \emph{deletion} of $i$ from $K$ i.e. the full subcomplex in $K$ on $[m]\setminus\{i\}$. We say that $K$ is a \emph{Golod complex} (\emph{minimally non-Golod complex}) if its face ring $\Bbbk[K]$ is Golod (minimally non-Golod) for any field $\Bbbk$, respectively. 

Combinatorial classification of Golod and minimally non-Golod complexes is far from being complete even in low dimensions. However, some criteria characterizing Golodness and minimal non-Golodness in terms of related algebraic and topological properties were obtained in~\cite{GPTW, GIPS, LP}. We prove the following classification result for Bier spheres.

\begin{theorem}
Let $K\neq\Delta_{[m]}$ be a simplicial complex on $[m]$ with $m\geq 3$. The following statements hold.
\begin{itemize}
\item[(a)] $\Bier(K)$ is Golod if and only if either $K=\partial\Delta_{[m]}$ or $K^\vee=\partial\Delta_{[m']}$. In this case $\Bier(K)$ is polytopal and we have 
$$
P_K=\Delta^{m-1};
$$
\item[(b)] $\Bier(K)$ is minimally non-Golod if and only if either $K$ or $K^\vee$ equals the set of $\ell$ disjoint points, where $1\leq\ell\leq m$. In this case $\Bier(K)$ is polytopal and $P_K$ is a truncation polytope obtained from $\Delta^{m-1}$ by cutting off $\ell$ of its vertices; that is
$$
P_K=\vc^{\ell}(\Delta^{m-1}).
$$
\end{itemize}
\end{theorem}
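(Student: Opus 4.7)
For part (a), I begin by observing that if $K=\partial\Delta_{[m]}$ then $K^\vee=\{\emptyset\}$, so the deleted join collapses to $\partial\Delta^{m-1}$ on the ``$+$'' copy of vertices, and the symmetric case $K^\vee=\partial\Delta_{[m']}$ is identical after swapping the two copies. In both cases $\Bier(K)\cong\partial\Delta^{m-1}$, hence is polytopal with $P_K=\Delta^{m-1}$, and $\Bbbk[\Bier(K)]=\Bbbk[x_1,\dots,x_m]/(x_1\cdots x_m)$ is a hypersurface whose moment-angle complex $\zk$ is $S^{2m-1}$, so it is trivially Golod. For the converse, $\Bier(K)$ is always a PL $(m-2)$-sphere, so $\Bbbk[\Bier(K)]$ is Gorenstein; an Avramov--Golod type rigidity forces a Gorenstein and Golod face ring to come from the boundary of a simplex (see e.g.~\cite{BJ}). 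Matching the $m$ vertices of $\partial\Delta^{m-1}$ against the $2m$ candidates $\{i_\pm\}$, and using that each pair $\{i_+,i_-\}$ is a minimal non-face of every Bier sphere, forces either all ``$-$'' or all ``$+$'' vertices to be absent, yielding the two stated cases.

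For the ``if'' direction of part (b), when $K=\{\emptyset,\{i_1\},\dots,\{i_\ell\}\}$ is a set of $\ell\ge 1$ disjoint points, I carry out a direct case analysis of the faces $\sigma_+\cup\tau_-$ of $\Bier(K)$ (with $\sigma\in K,\ \tau\in K^\vee,\ \sigma\cap\tau=\emptyset$). This identifies $\Bier(K)$ with the simplicial complex obtained from $\partial\Delta^{m-1}$ by $\ell$ stellar subdivisions at distinct vertices, which is precisely the nerve complex of the truncation polytope $\vc^\ell(\Delta^{m-1})$; in particular $\Bier(K)$ is polytopal with $P_K=\vc^\ell(\Delta^{m-1})$. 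Minimal non-Golodness of the face ring of $\vc^\ell(\Delta^{m-1})$ for $1\le\ell\le m$ is known from the literature on truncation polytopes (e.g.~\cite{L14,GPTW}), and the case $K^\vee=\ell$ points is handled by swapping the two copies of vertices.

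The converse of (b) is the main obstacle. Assume $\Bier(K)$ is minimally non-Golod; it is then not Golod, so by part (a) neither $K$ nor $K^\vee$ equals $\partial\Delta_{[m]}$. My plan is to translate the hypothesis that $\del_{\Bier(K)}(v)$ is Golod for every vertex $v$ into combinatorial restrictions on $K$. The deletions $\del_{\Bier(K)}(i_\pm)$ admit explicit descriptions in terms of $\del_K(i)$, $\link_K(i)$, and the corresponding pieces of $K^\vee$; the Golodness obstructions from~\cite{GPTW,GIPS,LP} (non-Golodness is witnessed by a pair of ``linked'' minimal non-faces producing a nontrivial product in Tor) then force $K$ to contain no face of dimension $\ge 1$, since any edge of $K$ would survive into a deletion of $\Bier(K)$ that is not Golod. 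Once $K$ is discrete the forward computation pins down the parameter $\ell\in\{1,\dots,m\}$. I expect the technically hardest step to be ruling out edges in $K$: for this I plan to use the bigraded Betti number formulas for Bier spheres from~\cite{HK} to exhibit a nonvanishing product in $\Tor_{\Bbbk[2m]}(\Bbbk[\del_{\Bier(K)}(v)],\Bbbk)$ for a well-chosen vertex $v$, contradicting the minimal non-Golod hypothesis.
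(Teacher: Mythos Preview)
Your treatment of part (a) and the ``if'' direction of part (b) is essentially the same as the paper's: the Gorenstein--Golod rigidity reduces (a) to identifying when $\Bier(K)=\partial\Delta^{m-1}$, and the stellar-subdivision description of $\Bier(K)$ when $K$ (or $K^\vee$) is $\ell$ points yields the truncation polytope, whose minimal non-Golodness is in the literature.

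The converse of (b), however, has a genuine gap. You set out to prove that $K$ itself has no edge, writing that ``any edge of $K$ would survive into a deletion of $\Bier(K)$ that is not Golod.'' This is false, and in fact contradicted by your own ``if'' direction: when $K^\vee$ is a set of $\ell$ disjoint points (so that $\Bier(K)$ \emph{is} minimally non-Golod and every vertex deletion \emph{is} Golod), the complex $K$ typically has many edges --- e.g.\ if $K^\vee=\{\varnothing,\{1'\}\}$ then $\min(K)=\{[m]\setminus\{1\}\}$ and $\sk^1(K)$ is the complete graph on $[m]$. So you cannot hope to rule out edges in $K$; the correct target is the disjunction ``$K$ or $K^\vee$ is discrete.''

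The paper handles this by a direct case analysis using Hochster's formula (Theorem~\ref{BPtheorem}): assuming $\{1,2\}\in K$, it exhibits, for each putative obstruction (e.g.\ $\{1',2'\}\in K^\vee$, or some $\{1,i\}\notin K$, or some $\{i,j\}\notin K$), an explicit $4$- or $5$-cycle sitting as a full subcomplex in a suitable deletion $\del_{\Bier(K)}(v)$, which produces a nonzero product in $\Tor^+$ and contradicts Golodness of that deletion. Chaining these obstructions forces $\sk^1(K)$ to be complete and then $K^\vee$ to have no edges. Your plan to invoke the bigraded Betti computations of \cite{HK} or the criteria of \cite{GPTW,GIPS,LP} could in principle be made to work, but only once you aim at the correct conclusion (that one of $K,K^\vee$ is discrete) and identify the concrete full subcomplexes witnessing nontrivial multiplication; as stated, the strategy proves something that is not true.
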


The second type of a question we are concerned with is the problem of finding close relatives (natural  generalizations) of the Bier sphere construction. Here we make use of the polyhedral product construction as follows. Polyhedral products $(X, A)^K = \mathcal{Z}_K(X, A)$ are among the central objects (constructions) of toric topology~\cite{BP02, BBCG, TT}. In Section~\ref{sec:poly-prod} we review them in their relation to Alexander duality. In Section~\ref{sec:meet-1} we show that there exists a canonical isomorphism 
\[
  B(K,K^\vee)_{cubic} \cong \partial Z(K,K^\vee)
\]
where  $B(K,K^\vee)_{cubic}$ is the natural cubulation of the Bier sphere $B(K,K^\vee) = \Bier(K)$ described in Section \ref{sec:canonical_cubulation} and $\partial Z(K,K^\vee)$ is the boundary of the intersection 
\[
Z(K,K^\vee) := \mathcal{Z}_K(J, J_{\leqslant 0})\cap \mathcal{Z}_{K^\vee}(J, J_{\geqslant 0})  
\]
of two fundamental polyhedral products, associated to $K, K^\vee$ and $J=[-1,1]$.

This construction provides a natural place for Bier spheres in the theory of polyhedral products. It also sheds new light on some basic (and somewhat neglected) aspects of geometry and combinatorics of monomial ideals such as the ``theory of corners'' of D. Bayer, see \cite{B-preprint} and \cite[p. 86]{Sturm2005}.

We introduce key definitions and constructions used in this paper in Section 2 and further, where we need them. For more details on simplicial complexes, face rings and polyhedral products, we refer the reader to the monographs \cite{Matousek}, \cite{St}, and \cite{TT}, respectively.  

\section{Basic definitions and constructions}

In this section we collect those definitions, constructions and results concerning simplicial complexes that are crucial throughout the paper. 

In what follows we fix a positive integer $m\in\N$ and set $[m]:=\{1,2,\ldots,m\}$.

\begin{definition}
We say that a subset $K\subseteq 2^{[m]}$ is an (abstract) {\emph{simplicial complex}} on $[m]$ if the following two conditions hold:
\begin{enumerate}
\item $\varnothing\in K$; and
\item $\sigma\in K, \tau\subseteq\sigma\Rightarrow \tau\in K$.
\end{enumerate}
Elements of $K$ are called its {\emph{faces}} or {\emph{simplices}}. Faces of cardinality $1$ are called {\emph{vertices}}, faces of cardinality $2$ are called {\emph{edges}}. The set of vertices of $K$ is denoted by $V(K)$. The {\emph{dimension}} of $K$ is defined to be one less than the maximal cardinality of a face of $K$ and is denoted by $\dim(K)$. 
\end{definition}

It is easy to see that a simplicial complex $K$ on $[m]$ is determined by its set $\max(K)$ of maximal (w.r.t. inclusion) simplices called its {\emph{facets}}, as well as by the set $\MF(K)$ of minimal (w.r.t. inclusion) subsets in $[m]$ that are not elements of $K$ called its {\emph{minimal non-faces}}.
Minimal non-faces of cardinality $1$ are called {\emph{ghost vertices}}. We also say that maximal faces of a simplicial complex \emph{generate} this complex and write
$$
K=\langle I\subseteq [m]\,|\,I\in\max(K)\rangle.
$$

For any simplicial complex $K$ on $[m]$ and a subset $I\subseteq [m]$ we can define two important subcomplexes in $K$, namely
\begin{itemize}
\item the \emph{full subcomplex} in $K$ on $I$: 
$$
K_I := \{J\subseteq I\,|\,J\in K\} = K\cap 2^I;
$$ 
\item the \emph{link} of $I$:
$$
\link_K(I) := \{J\subseteq I^c\,|\,I\sqcup J\in K\},
$$
where $I^c:=[m]\setminus I$. \end{itemize}

\begin{example}
Observe that, by definition, for any simplicial complex $K$ on $[m]$ one has $\link_K(\varnothing)=K$, $I\in\max(K)$ if and only if $K_I=\Delta_{I}:=2^I$, the simplex with the vertex set $I$, and $I\in\MF(K)$ if and only if $K_I=\partial\Delta_I:=2^I\setminus\{I\}$, the boundary of the simplex with the vertex set $I$. The smallest simplicial complex on $[m]$ is the void complex $\varnothing_{[m]}:=\{\varnothing\}$.   
\end{example}

An algebraic object which is closely related to a simplicial complex $K$ and which carries exactly the same amount of information about $K$ as the \emph{combinatorial type} of $K$ (i.e. the face lattice isomorphism class) is the face ring of $K$. In what follows $\Bbbk$ denotes $\Z$ or a field and we use the notation 
$$
\Bbbk[m]:=\Bbbk[v_1,\ldots,v_m]\text{ and } v_I:=v_{i_1}\ldots v_{i_p}\in \Bbbk[m],
$$
where $I=\{i_1,\ldots,i_p\}\subseteq [m]$. If $I=\varnothing$, then $v_I:=1$.

\begin{definition}
Let $K$ be a simplicial complex on $[m]$. Its \emph{Stanley-Reisner ring} (or, \emph{face ring}) over $\Bbbk$ is a monomial ring
$$
\Bbbk[K]:=\Bbbk[m]/I_K,
$$
where the \emph{Stanley-Reisner ideal} (or \emph{face ideal}) of $K$ is a monomial ideal
$$
I_K:=(v_I\,|\,I\in\min(K)).
$$
\end{definition}

\begin{example}
Here are the first examples.
\begin{itemize}
\item If $K=\varnothing_{[m]}$, then $I_K=(v_1,\ldots,v_m)$ and $\Bbbk[K]=\Bbbk$;
\item If $K=\Delta_{[m]}$, then $I_K=(0)$ and $\Bbbk[K]=\Bbbk[m]$;
\item If $K=\partial\Delta_{[m]}$, then $\Bbbk[K]=\Bbbk[m]/(v_1\ldots v_m)$;
\item If $K$ is the 4-cycle on $[4]$ labeled clockwise, then 
$$
\Bbbk[K]=\Bbbk[v_1,v_2,v_3,v_4]/(v_1v_3, v_2v_4);
$$
\item If $K$ is the 5-cycle on $[5]$ labeled clockwise, then 
$$
\Bbbk[K]=\Bbbk[v_1,v_2,v_3,v_4,v_5]/(v_1v_3, v_1v_4, v_2v_4, v_2v_5, v_3v_5).
$$
\end{itemize}
\end{example}

\begin{definition}
A simplicial complex $K$ is called {\emph{flag}} if one of the following equivalent conditions holds:
\begin{itemize}
\item elements of $\MF(K)$ have cardinalities $\leq 2$;
\item each set of vertices of $K$ pairwise linked by edges in $K$ is itself in $K$.
\end{itemize} 
A simplicial complex $K$ is called \emph{pure} if all maximal faces of $K$ have the same cardinality. 
\end{definition}

In what follows we usually do not distinguish between an abstract simplicial complex on $[m]$ and its geometric realization in the Euclidean space, as well as between $i\in [m]$ and $\{i\}\in 2^{[m]}$. Among examples of pure simplicial complexes of key importance for our studies we should name the classes of combinatorial spheres. Here are those classes we shall deal with in this paper.

\begin{definition}
We call an $(n-1)$-dimensional simplicial complex $K$
\begin{itemize}
\item a \emph{polytopal sphere}, if it is isomorphic to the boundary of a simplicial $n$-polytope $Q$. In this case we write $K=K_P$ for the simple polytope $P$ dual to $Q$ and say that $K$ is a \emph{nerve complex} of $P$;
\item a \emph{starshaped sphere}, if there is a geometric realization of $K$ in $\R^n$ and a point $p$ in $\R^{n}$ such that each ray emanating from $p$ meets this realization in exactly one point;
\item \emph{PL-sphere}, if it is PL-homeomorphic to $\partial\Delta^{n}$;
\item \emph{simplicial sphere}, if it is homeomorphic to $S^{n-1}$;
\item \emph{(rational) homology sphere}, if for each $\sigma\in K$ one has:
\[
\tilde{H}_{i}(\link_{K}(\sigma);\Q)=\begin{cases}
0,&\text{if $i<n-1-|\sigma|$;}\\
\Q,&\text{if $i=n-1-|\sigma|$.}
\end{cases}
\]
\end{itemize} 
\end{definition}

Observe that the five classes of combinatorial spheres listed above form an increasing sequence w.r.t. inclusion when viewed from top to bottom. Moreover, when $n=2$ these classes coincide with each other. 

\begin{remark}
It was shown in~\cite{St} that the class of (rational) homology spheres coincides with the class of (rational) Gorenstein* complexes studied in combinatorial commutative algebra. 
\end{remark}

Let $[m]:=\{1,2,\ldots,m\}$ and $[m']:=\{1',2',\ldots,m'\}$ be two ordered sets with the map $\phi\colon i\mapsto i', 1\leq i\leq m$ being an order preserving bijection between them. Denote by $I'=\phi(I)$ the image of a subset $I\subseteq [m]$. 

Here is the first class of spheres we are going to study in this paper, it was introduced and proved to be contained in the class of PL-spheres in~\cite{Bier}. An explicit construction of the required PL-homeomorphism was given in~\cite{dL}. 

\begin{definition}
Suppose $K$ is a simplicial complex on $[m]$ and $K\neq\Delta_{[m]}$. Then its {\emph{Alexander dual}} is a simplicial complex $K^\vee$ on $[m']$ such that 
$$
I\in \MF(K)\Longleftrightarrow [m']\setminus I'\in \M(K^\vee).
$$
By the {\emph{Bier sphere}} over $K$ we mean a simplicial complex $\Bier(K)$ on $[m]\sqcup [m']$ such that 
$$
\Bier(K)=\{I\sqcup J'\,|\,I\in K, J'\in K^\vee, I\cap J=\varnothing\}.
$$
That is, the Bier sphere over $K$ is defined to be the {\emph{deleted join}} of $K$ and $K^\vee$.
\end{definition}

It is easy to classify all one-dimensional Bier spheres. 

\begin{example}\label{BierOneDimExample}
Direct application of the above definition shows that a 1-dimensional Bier sphere $\Bier(K)$ is combinatorially equivalent to the boundary of 
\begin{itemize}
\item a triangle, if $K=\langle\{1,2\}, \{1,3\}, \{2,3\}\rangle$;
\item a square, if $K=\langle\{1,2\}, \{2,3\}\rangle$ or $K=\langle\{1,2\}\rangle$;
\item a pentagon, if $K=\langle\{1\}, \{2,3\}\rangle$; 
\item a hexagon, if $K=\langle\{1\}, \{2\}, \{3\}\rangle$.
\end{itemize} 
\end{example}

Asymptotically most of the Bier spheres are non-polytopal, see~\cite{Matousek}, although no particular example of a non-polytopal Bier sphere has been constructed so far, see~\cite[Problem 4.7]{LS}. Furthermore, Bier spheres were proven to be starshaped spheres in~\cite{Zivaljevic19}.

The second class of spheres we are going to study in this paper is the class of generalized Bier spheres, or the Murai spheres introduced in~\cite{Mu}. Up to the end of this section, we fix a positive integer vector $c=(c_1,\ldots,c_m)\in\N^m$ and we set $\bar{c}=(c_{1}+1,\ldots,c_{m}+1)\in\N^{m}$. In our discussion of Murai spheres we use the notations from~\cite{Mu}.

The Murai sphere construction arose in combinatorial commutative algebra, it is based on the notion of a multicomplex, which generalizes that of a simplicial complex and it makes use of the Alexander duality theory for monomial ideals. 

The multicomplex-theoretic Alexander duality goes as follows. 

\begin{definition}
By a $c$-\emph{multicomplex} we mean a non-empty set $M$ of $c$-\emph{monomials} in $\Bbbk[m]$, that is
$$
x^{a}:=x_1^{a_1}\cdot\ldots\cdot x_{m}^{a_m}\in M\text{ with } a_i\leq c_i\text{ for all }1\leq i\leq m
$$
such that if $m_1\in M$ and $m_2$ divides $m_1$, then $m_2\in M$.

Its {\emph{Alexander dual}} w.r.t. $c$ is a $c$-multicomplex $M^\vee$ defined by
$$
M^\vee := \{(x^{a})^c\,|\,x^{a}\text{ is a }c-\text{monomial in }\Bbbk[m]\text{ such that }x^{a}\notin M\},
$$
where
$$
(x^{a})^c:=x_1^{c_1-a_1}\cdot\ldots\cdot x_m^{c_m-a_m}.
$$
We call a $c$-multicomplex $M$ $c$-{\emph{full}} if it is the set of all $c$-monomials; otherwise, $M$ is a {\emph{proper $c$-multicomplex}}. 
\end{definition}

Note that for $c=(1,\ldots,1)$ a $c$-multicomplex is essentially the same object as a simplicial complex on $[m]$. Similarly to this particular case, in general we define the sets $\max(M)$ and $\min(M)$ for any $c$-multicomplex $M$. We also say that maximal elements of $M$ \emph{generate} this multicomplex and write
$$
M=\langle p\in\Bbbk[m]\,|\,p\in\max(M)\rangle.
$$

The ideal-theoretic Alexander duality goes as follows. 

\begin{definition}
Let $I\subset \Bbbk[m]$ be a $c$-\emph{ideal}, that is, $I$ is generated by $c$-monomials. Its {\emph{Alexander dual w.r.t. $c$}} is a $c$-ideal $I^\vee$ in $\Bbbk[m]$ defined by
$$
I^\vee := \{(x^{a})^c\,|\,x^{a}\text{ is a }c-\text{monomial in }\Bbbk[m]\text{ such that }x^{a}\notin I\}.
$$ 
\end{definition}

Denote by $I_c(M)\subset S$ the monomial ideal generated by all $c$-monomials not in $M$. Note that for $c=(1,\ldots,1)$ this is the Stanley-Reisner ideal of the simplicial complex corresponding to $M$. It is easy to see that
$$
(I_c(M))^\vee = I_c(M^\vee).
$$

Let us describe the set on which we are going to define a Murai sphere. 

Given $i, 1\leq i\leq m$ define 
$$
\tilde{X}_i := \{x^{(0)}_i,\ldots,x^{(c_i)}_i\}.
$$
and set 
$$
\tilde{X} := \tilde{X}_1\cup\tilde{X}_2\cup\ldots\cup\tilde{X}_m.
$$
Finally, for each $c$-monomial $x^a$, let
$$
F_c(x^a) := \tilde{X}\setminus \{x^{(a_1)}_1,\ldots,x^{(a_m)}_m\}.
$$

Now, we are ready to give a definition of a Murai sphere. 

\begin{definition}
For a proper $c$-multicomplex $M$, we define its {\emph{generalized Bier sphere}}, or the {\emph{Murai sphere}} to be a simplicial complex $\Bier_c(M)$ on $\tilde{X}$ generated by the following set of its maximal faces:
$$
\M(\Bier_c(M)) := \{F_c(x^a)\setminus\{x^{(j)}_i\}\,|\,x^a\in M, x^{a}x^{-a_{i}+j}_i\notin M, a_i < j \leq c_i\}.
$$
\end{definition}

In~\cite[Proposition 1.10]{Mu} it was proved that $\Bier_c(M)$ is a $(|c|-2)$-dimensional simplicial sphere. Furthermore, \cite[Theorem 1.10]{Mu} shows that for $c=(1,\ldots,1)$ the above definition yields a classical Bier sphere of the corresponding simplicial complex.

\begin{remark}
For any proper $c$-multicomplex $M$, its generalized Bier sphere $\Bier_c(M)$ is shellable (\cite[Theorem 2.1]{Mu}) and edge decomposable (\cite[Theorem 4.6]{Mu})  
\end{remark}

Finally, let us describe the set $\min(\Bier_c(M))$ of minimal non-faces of a Murai sphere $\Bier_c(M)$ in terms of its Stanley-Reisner ideal.
First, we define two polarizations for a $\bar{c}$-ideal in $\Bbbk[m]$; they will both belong to the polynomial algebra on the set of variables 
$$
X := X_1\cup X_2\cup\ldots\cup X_m,\text{ where }X_i := \{x_{i,0},\ldots,x_{i,c_i}\}\text{ for }1\leq i\leq m.
$$.

Given a monomial ideal $I$, we denote by $G(I)$ the unique minimal set of monomial generators of $I$.

\begin{definition}
The \emph{polarization} of a $\bar{c}$-ideal $I\subset\Bbbk[m]$ is the monomial ideal $\pol(I)$ in $\Bbbk[X]$ 
such that
$$
\pol(I):=(\pol(x^a):=\prod\limits_{a_i\neq 0}x_{i,0}\ldots x_{i,a_{i}-1}\,|\,x^a\in G(I))\subset\Bbbk[X].
$$

The $\ast$-\emph{polarization} of a $\bar{c}$-ideal $I\subset\Bbbk[m]$ is the monomial ideal $\pol^\ast(I)$ in $\Bbbk[X]$ such that
$$
\pol^\ast(I):=(\pol^\ast(x^a):=\prod\limits_{a_i\neq 0}x_{i,c_i}\ldots x_{i,c_i-a_{i}+1}\,|\,x^a\in G(I))\subset\Bbbk[X].
$$
\end{definition}

The next statement proved in~\cite[Theorem 3.6]{Mu} describes the face ideal of $\Bier_c(M)$ and hence its set of minimal non-faces $\min(\Bier_c(M))$.

\begin{theorem}[\cite{Mu}]\label{FaceIdealMuraiThm}
Let $M$ be a proper $c$-multicomplex. Then we have:
$$
I_{\Bier_c(M)}=\pol(I_c(M))+\pol^\ast(I_c(M^\vee))+\pol(x_1^{c_{1}+1},\ldots,x_{m}^{c_{m}+1}).
$$
\end{theorem}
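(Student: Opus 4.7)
The plan is to compute $I_{\Bier_c(M)}$ by reading off the non-faces of $\Bier_c(M)$ directly from the combinatorial description of its facets. To each subset $S\subseteq\tilde X$ I associate the profile $A_k:=\{j\colon x_k^{(j)}\in S\}\subseteq\{0,\ldots,c_k\}$ and note that a facet $F_c(x^a)\setminus\{x_i^{(j)}\}$ contains $S$ iff $a_k\notin A_k$ for every $k$ and $j\notin A_i$. Thus $S$ is a non-face iff no datum $(x^a,i,j)$ from the definition of $\M(\Bier_c(M))$ meets these avoidance conditions, and I will match each of the three families of generators on the right-hand side with an obstruction of this form.

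For the inclusion RHS $\subseteq$ LHS I check each family separately. The generator $\pol(x_k^{c_k+1})=x_{k,0}\cdots x_{k,c_k}$ divides $m_S:=\prod_{v\in S}v$ iff $A_k=\{0,\ldots,c_k\}$, which leaves no admissible $a_k$. For $x^a\in G(I_c(M))$, the monomial $\pol(x^a)$ divides $m_S$ iff $\{0,\ldots,a_k-1\}\subseteq A_k$ for every $k$; any would-be facet datum $(x^b,i,j)$ with $b_k\notin A_k$ then satisfies $b_k\geq a_k$, forcing $x^a\mid x^b\in M$ and hence $x^a\in M$, contradicting $x^a\in G(I_c(M))$. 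For $x^b\in G(I_c(M^\vee))$, the involutivity of $c$-complement gives $(x^b)^c\in M$, while $\pol^\ast(x^b)\mid m_S$ forces $A_k\supseteq\{c_k-b_k+1,\ldots,c_k\}$; any facet datum $(x^a,i,j)$ consistent with $S$ then satisfies $a_k\leq c_k-b_k$ and $j\leq c_i-b_i$, so $x^ax_i^{j-a_i}$ divides $(x^b)^c\in M$ and must itself lie in $M$, contradicting $x^ax_i^{j-a_i}\notin M$.

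For LHS $\subseteq$ RHS, let $S$ be a non-face. If $\tilde X_k\subseteq S$ for some $k$ then $\pol(x_k^{c_k+1})\mid m_S$ and we are done, so assume $B_k:=\{0,\ldots,c_k\}\setminus A_k\neq\varnothing$ for every $k$. Define $\alpha_k:=\min B_k$ and $\beta_k:=c_k-\max B_k$, so that by construction $\pol(x^\alpha)\mid m_S$ and $\pol^\ast(x^\beta)\mid m_S$. If $x^\alpha\notin M$, some divisor of $x^\alpha$ in $G(I_c(M))$ polarizes to a generator of $\pol(I_c(M))$ dividing $m_S$; symmetrically, if $x^\beta\notin M^\vee$ we obtain a generator of $\pol^\ast(I_c(M^\vee))$ dividing $m_S$. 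Otherwise $x^\alpha\in M$ and $x^{c-\beta}=(x^\beta)^c\notin M$, and I derive a contradiction by exhibiting a facet of $\Bier_c(M)$ containing $S$. Namely, the set $T:=\{a\in\prod_kB_k\colon x^a\in M\}$ contains $\alpha$ and is bounded above by $c-\beta$; pick a componentwise maximal $a^\ast\in T$. Since $x^{c-\beta}\notin M$, $a^\ast\neq c-\beta$ and some coordinate $i$ satisfies $a^\ast_i<c_i-\beta_i$. Setting $j:=c_i-\beta_i\in B_i$, replacing the $i$-th entry of $a^\ast$ by $j$ yields a point of $\prod_kB_k$ strictly dominating $a^\ast$, so by maximality this point is not in $T$, i.e.\ $x^{a^\ast}x_i^{j-a^\ast_i}\notin M$. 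Hence $F_c(x^{a^\ast})\setminus\{x_i^{(j)}\}$ is a facet of $\Bier_c(M)$, and it contains $S$ because $a^\ast_k\in B_k$ and $j\in B_i$, contradicting the non-face hypothesis.

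I expect the main obstacle to be this final facet-construction: the required "edge" of $M$ (the pair $x^{a^\ast}$ and $x^{a^\ast}x_i^{j-a^\ast_i}$) must be found while staying inside the possibly nonconvex subset $\prod_kB_k$ of the monomial box. The decisive ingredients making the maximality argument on $T$ work are the down-closed nature of multicomplexes (so $T$ is genuinely a down-set in $\prod_kB_k$) and the involutivity of $c$-complementation (identifying $M^\vee$-membership with $M$-non-membership), which together ensure that $\alpha$ witnesses the lower boundary of $T$ and $c-\beta$ is a genuine upper obstruction precisely in the case where neither Type 2 nor Type 3 generators divide $m_S$.
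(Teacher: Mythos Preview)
This theorem is not proved in the present paper; it is quoted from \cite[Theorem~3.6]{Mu} and invoked as a black box for the classification of flag Murai spheres. There is therefore no argument here to compare yours against.

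That said, your self-contained proof is correct. The inclusion of the right-hand side in $I_{\Bier_c(M)}$ is routine, and the only step in the reverse inclusion that deserves scrutiny is the facet construction in the residual case $x^\alpha\in M$, $x^\beta\in M^\vee$. Your argument there is sound: $T=\{a\in\prod_kB_k:x^a\in M\}$ is finite and contains $\alpha$, every element of $T$ is bounded componentwise by $c-\beta$ (since $\max B_k=c_k-\beta_k$), and $c-\beta\notin T$ because $(x^\beta)^c\notin M$; hence any componentwise-maximal $a^\ast\in T$ satisfies $a^\ast_i<c_i-\beta_i$ for some $i$, and increasing that coordinate to $j:=\max B_i\in B_i$ yields, by maximality, a monomial $x^{a^\ast}x_i^{\,j-a^\ast_i}\notin M$ with $a^\ast_i<j\leq c_i$. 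This is exactly a datum in $\M(\Bier_c(M))$, and since $a^\ast_k\in B_k$ for all $k$ and $j\in B_i$, the resulting facet contains $S$, contradicting the non-face assumption. One minor remark: in your final paragraph you describe the down-closedness of $T$ in $\prod_kB_k$ as a ``decisive ingredient'', but your proof as written does not use it; finiteness, nonemptiness, and maximality of $a^\ast$ are all that is needed.
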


\begin{remark}
A purely algebraic proof of the fact that $\Bier_c(M)$ is a Gorenstein complex (over any field), i.e. a join of a simplex and a homology sphere, based on the above result was given in~\cite[Theorem 5.3]{Mu}.  
\end{remark}

When $c=(1,\ldots,1)$, Theorem~\ref{FaceIdealMuraiThm} follows from the well-known description of the set of minimal non-faces $\min(\Bier(K))$ of a Bier sphere $\Bier(K)$, see also~\cite[Proposition 2.6]{LS}. On the other hand, it is important to note that the union of ideal generators in the r.h.s. may not be in one-to-one correspondence with the set of minimal non-faces even in the Bier sphere case, see~\cite[Example 3.8]{Mu}. 

\section{Bier spheres and face vectors}

In this section we discuss some combinatorial and geometric properties of Bier spheres as well as the Nevo-Petersen conjecture for flag homology spheres. We show that for flag Bier spheres that conjecture is true.

Let us recall the definition of face vectors of a simplicial complex.

\begin{definition}
Let $K$ be an $(n-1)$-dimensional simplicial complex. Its \emph{$f$-vector} is the tuple $(f_{-1}, f_0,\ldots,f_{n-1})$, where $f_i$ is the number of $i$-dimensional faces in $K$. Alongside with the $f$-vector, the following face vectors of $K$ are considered:
\begin{itemize}
\item \emph{$h$-vector} $h(K)=(h_0,h_1,\ldots,h_n)$:
$$
h_0t^n+\ldots+h_{n-1}t+h_n=(t-1)^n+f_0(t-1)^{n-1}+\ldots+f_{n-1};
$$
\item \emph{$\gamma$-vector} $\gamma(K)=(\gamma_0,\gamma_1,\ldots,\gamma_{[n/2]})$:
$$
h_0+h_1t+\ldots+h_nt^n=\sum\limits_{i=0}^{[n/2]}\gamma_it^i(1+t)^{n-2i}.
$$
\end{itemize}
The last representation exists provided the $h$-vector of $K$ is symmetric, so this is the class of simplicial complexes for which we consider their $\gamma$-vectors.    
\end{definition}

Since homology spheres satisfy the Dehn-Sommerville relations, see~\cite{St}, their $h$-vectors are symmetric and hence one can consider $\gamma$-vectors of homology spheres. 

Our main goal in this section is to show that in the class of flag Bier spheres the following conjecture holds.

\begin{conjecture}[\cite{NP}]
If $K$ is a flag homology sphere, then its $\gamma$-vector $\gamma(K)$ is the $f$-vector of a flag simplicial complex.    
\end{conjecture}

Let us discuss the combinatorial types of flag Bier spheres. The next lemma contains a few easy observations that are useful in the classification of all flag Bier spheres.

\begin{lemma}\label{BierBasicPropertiesLemma}
Let $K\neq\Delta_{[m]}$ be a simplicial complex on $[m]$ with $m\geq 3$. The following statements hold:
\begin{itemize}
\item[(a)] $\Bier(K)$ is a boundary of a simplex $\Leftrightarrow$ either $K=\partial \Delta_{[m]}$ or $K^\vee=\partial \Delta_{[m']}$;
\item[(b)] $\Bier(K)$ is a flag simplicial complex $\Leftrightarrow$ $K$ and $K^\vee$ are flag simplicial complexes;
\item[(c)] if $K=\Delta^d\ast L$, then $K^\vee=\Delta^d\ast L^\vee$ and 
$$
\Bier(K)=\Sigma^{d+1}\Bier(L)=\partial\diamondsuit^{d+1}\ast\Bier(L), 
$$
where $\diamondsuit^n$ denotes the $n$-dimensional cross-polytope; that is, $\diamondsuit^n := (I^n)^*$.
\end{itemize}
\end{lemma}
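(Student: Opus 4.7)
The plan is to prove each item directly from the definition of $\Bier(K)$ as a deleted join, using the following description of its minimal non-faces: $\MF(\Bier(K))$ consists of (i) $\MF(K)$, viewed as subsets of $[m]$, (ii) $\MF(K^\vee)$, viewed as subsets of $[m']$, and (iii) the pairs $\{i,i'\}$ for those $i$ with $i \in V(K)$ and $i' \in V(K^\vee)$. This holds because a non-face of $\Bier(K)$ arises either from failure of $I \in K$ or $J' \in K^\vee$, or from $I \cap J \neq \varnothing$; minimality in the last case forces the non-face to be a pair $\{k,k'\}$.

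For part (a), the reverse direction is immediate: if $K = \partial\Delta_{[m]}$, then $\MF(K) = \{[m]\}$ and by Alexander duality $K^\vee = \{\varnothing\}$, so the deleted join condition is vacuous and $\Bier(K) = K \cong \partial\Delta^{m-1}$ (all primed vertices being ghost). For the forward direction, suppose $\Bier(K) \cong \partial\Delta^{m-1}$, whose unique non-ghost minimal non-face has size $m \geq 3$. By the description above, no pair $\{i,i'\}$ can be a minimal non-face of $\Bier(K)$, so for each $i \in [m]$ at most one of $i, i'$ is a vertex. Writing $S := V(K)$ and $T := \{i \in [m] : i' \in V(K^\vee)\}$ yields a disjoint decomposition $S \sqcup T = [m]$, and $S \sqcup T'$ is the non-ghost vertex set, which must therefore coincide with the unique non-ghost minimal non-face. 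Examining proper subsets of the form $(S \setminus \{i\}) \sqcup T'$ and $S \sqcup (T' \setminus \{j'\})$ and translating through Alexander duality forces either $T = \varnothing$ (hence $K^\vee = \{\varnothing\}$ and $K = \partial\Delta_{[m]}$) or $S = \varnothing$ (dually, $K^\vee = \partial\Delta_{[m']}$).

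Part (b) then follows directly from the description of $\MF(\Bier(K))$: the pairs $\{i,i'\}$ have size $2$, so $\Bier(K)$ is flag iff every element of $\MF(K) \cup \MF(K^\vee)$ has size $\leq 2$, iff both $K$ and $K^\vee$ are flag. For part (c), write $K = \Delta^d_A * L$, where $A \subseteq [m]$ is the apex set of size $d+1$ and $L$ is a complex on $[m] \setminus A$. Cone vertices cannot appear in minimal non-faces, so $\MF(K) = \MF(L)$; Alexander duality gives $\M(K^\vee) = \{[m'] \setminus I' : I \in \MF(L)\}$, and each such facet contains $A'$, proving $K^\vee = \Delta^d_{A'} * L^\vee$. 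For the Bier sphere, every face decomposes as $I \sqcup J' = (I_A \sqcup J_A') \sqcup (I_L \sqcup J_L')$, and the conditions $I \in K$, $J' \in K^\vee$, $I \cap J = \varnothing$ split into $I_A \cap J_A = \varnothing$ on the $A$-side (a face of $\partial\diamondsuit^{d+1}$ on $A \sqcup A'$) and $I_L \sqcup J_L' \in \Bier(L)$. Hence $\Bier(K) = \partial\diamondsuit^{d+1} * \Bier(L) = \Sigma^{d+1}\Bier(L)$, since $\partial\diamondsuit^{d+1}$ is a $d$-sphere. The main technical step is the forward direction of (a), where ruling out mixed configurations of primed and unprimed non-ghost vertices requires both the size-$2$ minimal-non-face obstruction and the proper-subset analysis above; the rest of the lemma consists of routine deleted-join manipulations.
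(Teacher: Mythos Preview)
Your proof is correct and follows essentially the same route as the paper: all three parts rest on the explicit description of $\MF(\Bier(K))$ as the union of $\MF(K)$, $\MF(K^\vee)$, and the diagonal pairs $\{i,i'\}$. Two minor differences are worth noting. In (a), your proper-subset analysis is more than needed: once the diagonal pairs are ruled out by $m\geq 3$, the unique size-$m$ minimal non-face $S\sqcup T'$ must already lie in $\MF(K)\subseteq 2^{[m]}$ or in $\MF(K^\vee)\subseteq 2^{[m']}$, which forces $T=\varnothing$ or $S=\varnothing$ immediately (this is essentially what the paper's terse sentence ``$\min(\Bier(K))$ equals either $\min(K)$ or $\min(K^\vee)$'' is encoding). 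In (c), you argue directly for general $d$ via the join decomposition of faces, whereas the paper reduces by induction to $d=0$ and then compares minimal non-faces; your version is a clean alternative but not a different idea.
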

\begin{proof}
To prove (a) note that $\dim(\Bier(K))=m-2$ implies that $\Bier(K)$ is a boundary of a simplex if and only if $\Bier(K)$ is a boundary of $\Delta^{m-1}$. Then $\min(\Bier(K))$ equals either $\min(K)$ or $\min(K^\vee)$. This holds if and only if either $K=\partial \Delta_{[m]}$ or $K^\vee=\partial \Delta_{[m']}$. 

Statement (b) follows immediately from the structure of the set $\min(\Bier(K))$ and the definition of a flag simplicial complex.

To prove (c) we use induction on $d$; it suffices to prove it for $d=0$. Observe that $K$ is a cone over $L$ with apex $v$ implies that each maximal simplex of $K$ is the union of $\{v\}$ and a maximal simplex of $L$. By Alexander duality it is equivalent to $v'$ not belonging to any minimal non-face of $L^\vee$ and it means that each maximal simplex of $K^\vee$ contains $\{v'\}$. Thus, $K^\vee$ is a cone over $L^\vee$ with apex $v'$. Finally, the identity $\Bier(K)=\Sigma\Bier(L)$ follows by inspecting the sets of minimal non-faces on both sides. This finishes the proof.
\end{proof}

The next result was proven first in~\cite[Proposition 2.1]{HK}. Below we provide a different proof, which makes use of the classification of Bier spheres in dimensions not greater than two, for the sake of completeness.

\begin{lemma}\label{FlagBierClassificationLemma}
Let $K\neq\Delta_{[m]}$ be a simplicial complex on $[m]$ with $m\geq 3$. Then $\Bier(K)$ is flag if and only if it is combinatorially equivalent to a Bier sphere over a join of a simplex and one of the following complexes:
\begin{enumerate}
    \item a point and a ghost vertex;
    \item two disjoint points;
    \item disjoint vertex and segment;
    \item three disjoint points;
    \item chain on four vertices;
    \item cycle on four vertices.
\end{enumerate}
Furthermore, the Bier spheres corresponding to (1) and (2) are simplicially isomorphic. The same holds for the Bier spheres corresponding to (5) and (6).
\end{lemma}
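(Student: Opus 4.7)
My plan is to reduce the classification to a bounded enumeration via Lemma~\ref{BierBasicPropertiesLemma}, and then finish by a finite case check. First, I would apply part (c) of that lemma to write $K = \Delta^d \ast L$ with $L$ having no cone vertex (using the convention $d = -1$ to mean $K = L$). Since $\partial\diamondsuit^{d+1}$ is itself flag, and since the minimal non-faces of a join $A \ast B$ are precisely those of $A$ or of $B$, any join of flag complexes is flag. Combined with the identity $\Bier(K) = \partial\diamondsuit^{d+1} \ast \Bier(L)$ from Lemma~\ref{BierBasicPropertiesLemma}(c), this shows that $\Bier(K)$ is flag if and only if $\Bier(L)$ is flag, reducing the problem to a classification of such reduced $L$.

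Next, by Lemma~\ref{BierBasicPropertiesLemma}(b), $\Bier(L)$ is flag iff both $L$ and $L^\vee$ are flag. Since $L$ is flag, it is the clique complex of its $1$-skeleton $G$ on its $v$ non-ghost vertices, together with $g$ ghost vertices, so $m = v + g$. Flagness of $L^\vee$ says every minimal non-face of $L^\vee$ has size $\leq 2$, which by Alexander duality is equivalent to every maximal face of $L$ (equivalently every maximal clique of $G$) having size at least $m - 2$. Since cliques in $G$ have size at most $v$, this immediately forces $g \leq 2$.

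The heart of the argument is a case analysis on $g$. For $g = 2$, the inequality forces $G = K_v$, and then every non-ghost vertex is a cone vertex of $L$ unless $v = 0$; the only reduced $L$ is $\{\varnothing\}$ on $[2]$, whose Bier sphere is $S^0$, matching cases (1) and (2). For $g = 1$, a similar argument gives $v \leq 2$, and the only reduced example (two isolated vertices plus a ghost) produces a pentagon, matching (3). For $g = 0$, the condition on maximal cliques is equivalent to every minimal vertex cover of $G^c$ having size at most $2$; combined with the no-universal-vertex condition (equivalently, no isolated vertex of $G^c$), an analysis of the possible structures of $G^c$ forces $v \leq 4$. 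A finite enumeration of graphs on $v \leq 4$ vertices with no universal vertex and every maximal clique of size at least $v - 2$ then shows, case by case, that each such reduced $L$ has $\Bier(L)$ combinatorially equivalent to the Bier sphere of one of the six listed complexes, sometimes after replacing $L$ by $L^\vee$ (which yields an isomorphic Bier sphere by swapping primed and unprimed vertices). The converse direction is immediate from Lemma~\ref{BierBasicPropertiesLemma}(c) together with a short verification that $\Bier(L_i)$ is flag for each listed $L_i$: Example~\ref{BierOneDimExample} handles the one-dimensional cases, and a direct computation handles cases (5) and (6).

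The ``furthermore'' assertions come out of the explicit computation. In (1) and (2) both Bier spheres collapse to the zero-sphere on exactly two non-ghost vertices, so the isomorphism is immediate. For (5) and (6), I would compute that $\Bier(P_4)$ and $\Bier(C_4)$ share the common $f$-vector $(8, 18, 12)$ and identify them as the same simplicial $2$-sphere; the cleanest route is to recognize both as the nerve complex of the simple $3$-polytope $Q_2^3$ obtained from the cube by two adjacent edge cuts. This last identification is the most delicate part of the argument, since the graphs $P_4$ and $C_4$ are themselves not isomorphic: it is an instance of the general phenomenon that different complexes $L$ can produce the same Bier sphere, and must be verified by explicit bookkeeping of the face lattices.
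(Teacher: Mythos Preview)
Your approach is sound and reaches the same destination as the paper, but the organization is genuinely different. The paper argues by induction on $m$: in the main case ($K$ has no ghost vertices) it observes that $K$ is pure of dimension $m-3$, then uses a pigeonhole count on a facet $\sigma$ of size $m-2$ and the two remaining vertices $p,q$ to exhibit a cone apex whenever $m\geq 5$; the base cases $m\leq 4$ are read off from Example~\ref{BierOneDimExample} and \cite[Theorem~2.16]{LS}. You instead strip off all cone vertices at once, writing $K=\Delta^d\ast L$ with $L$ cone-free, and translate flagness of $L$ and $L^\vee$ into the graph-theoretic condition ``every maximal clique of $G$ has size $\geq v+g-2$'' on the $1$-skeleton $G$. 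The two arguments are dual in spirit: the paper's pigeonhole step is precisely what underwrites your asserted bound $v\leq 4$ in the $g=0$ case, so neither route bypasses the other's core combinatorics. Your packaging is tidier (no induction, a uniform case split on $g$), while the paper has the advantage that the crucial bound is proved in full rather than deferred to ``an analysis of the possible structures of $G^c$''.

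One small omission to patch: in your $g=1$ case the reduced complex $L=\{\varnothing\}$ on a single ghost vertex ($v=0$) is also cone-free and must be listed alongside the ``two isolated vertices plus a ghost'' example; you allow $v=0$ when $g=2$ but silently drop it when $g=1$. After rejoining a simplex this $L$ produces $\Bier(K)=\partial\diamondsuit^{d+1}$, which is already covered by cases (1)/(2), so the lemma is unaffected---but your enumeration as written is incomplete.
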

\begin{proof}
Suppose $K$ has no ghost vertices. Since $\dim\Bier(K)=m-2$, by  Lemma~\ref{BierBasicPropertiesLemma} (b) $K^\vee$ is flag and $K$ must be a pure flag simplicial complex of dimension $m-3$. Let us use induction on $m$. 

For $m\leq 4$ the statement is true as shown in Example~\ref{BierOneDimExample} and~\cite[Theorem 2.16]{LS}, so consider $m\geq 5$ and let $\sigma$ be a maximal simplex in $K$. Then $|\sigma|=m-2$ and let $p,q$ be the two remaining vertices of $K$. 

If $\{p,q\}\notin K$, then each of $p$ and $q$ is linked by an edge with $m-3$ vertices of $\sigma$, since $K$ is pure. As $(m-3)+(m-3)>m-2=|\sigma|$, there is a vertex $i\in\sigma$, such that $\{i,p\}$ and $\{i,q\}$ are edges in $K$. This implies that $K$ is a cone with apex $i$ and we apply the inductive hypothesis.

If $\{p,q\}\in K$, then there is a maximal simplex $\tau\in K$ such that $\{p,q\}\subseteq\tau$. Since $|\tau|+|\sigma|=2m-4>m$, there is a common vertex $j\in \sigma\cap\tau$. Hence $K$ is a cone with apex $j$ and we apply the inductive hypothesis again.

Suppose $K^\vee$ has no ghost vertices. By Lemma~\ref{BierBasicPropertiesLemma} (c) if $K=\Delta^d\ast L$, then its Alexander dual is $K^\vee=\Delta^d\ast L^\vee$. Hence in this case the above argument applies. 

Suppose both $K$ and $K^\vee$ have ghost vertices; denote their sets of ghost vertices by $V$ and $V'$, respectively. Observe that by definition of Alexander duality, if $i\in V$, then $[m']\setminus\{i'\}\in K^\vee$ and therefore either $V'=\varnothing$, or $V'=\{i'\}$ . Thus, w.l.o.g. we can assume that in the remaining case under consideration we have $V=\{m\}, V'=\{m'\}$ and therefore $K=\Delta_{[(m-1)]}$, a join of a simplex with a complex on $\{m-1, m\}$ having the unique maximal face $\{m-1\}$ and the unique minimal non-face $\{m\}$, the ghost vertex. 

Finally, the required isomorphism of Bier spheres in (1) and (2), as well as in (5) and (6) follows from the definition and the classification of Bier spheres in dimension two given in~\cite[Theorem 2.16]{LS}. This finishes the proof.
\end{proof}

In what follows we use a special notation in case of a polytopal Bier sphere. Namely, when a Bier sphere $\Bier(K)$ is polytopal we denote by $P_K$ the corresponding convex simple polytope so that
$$
\Bier(K)=\partial P^*_K.
$$

\begin{theorem}\label{FlagBierPolytopeClassificationThm}
A Bier sphere $\Bier(K)$ is flag if and only if it is polytopal with $P_K$ being a flag nestohedron of one of the following types:
\begin{enumerate}
    \item $P_K=I^n$ for $n\geq 1$;
    \item $P_K=I^n\times P_5$ for $n\geq 0$;
    \item $P_K=I^n\times P_6$ for $n\geq 0$;
    \item $P_K=I^n\times Q_2^3$ for $n\geq 0$, 
\end{enumerate}
where $P_m$ for $m\geq 3$ denotes the $m$-gon and $Q_2^3$ denotes the 3-dimensional simple polytope, which can be obtained from the 3-dimensional cube $I^3$ by two adjacent edge cuts.
\end{theorem}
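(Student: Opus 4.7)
The plan is to combine Lemma~\ref{FlagBierClassificationLemma} with the standard duality between simplicial joins and products of simple polytopes. By Lemma~\ref{FlagBierClassificationLemma}, every flag Bier sphere $\Bier(K)$ with $K\neq\Delta_{[m]}$ and $m\geq 3$ is simplicially isomorphic to $\Bier(\Delta^{d}\ast L)$ for some $d\geq -1$ (using the convention $\Delta^{-1}=\varnothing$) and some $L$ from the list~(1)--(6) of that lemma; the degenerate case $m=2$ produces only $\Bier(K)=S^0=\partial I^1$ and is handled separately. Lemma~\ref{BierBasicPropertiesLemma}(c) then gives
\[
\Bier(\Delta^d\ast L)=\partial\diamondsuit^{d+1}\ast\Bier(L),
\]
so it remains to (a) identify each seed Bier sphere $\Bier(L)$ and (b) translate a simplex join on the simplicial side into a cube product on the simple-polytope side.

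For step~(a) I would compute the seed Bier spheres directly: cases~(1) and~(2) produce $S^0=\partial I^1$, so $P_L=I^1$ there; by Example~\ref{BierOneDimExample} case~(3) gives the boundary of the pentagon with dual polytope $P_5$ and case~(4) the boundary of the hexagon with dual $P_6$; and by~\cite[Theorem 2.16]{LS} cases~(5) and~(6) both yield the 2-sphere that is the nerve complex of $Q_2^3$. For step~(b) I would use the standard identity $K_{Q\times R}=K_Q\ast K_R$ for nerve complexes of simple polytopes, combined with the basic observation that $K_{I^{d+1}}=\partial\diamondsuit^{d+1}$. Substituting these into the displayed formula yields
\[
\Bier(K)=\partial\bigl(I^{d+1}\times P_L\bigr)^{*},
\]
so $\Bier(K)$ is polytopal with $P_K=I^{d+1}\times P_L$; this matches one of the four families of the theorem, after absorbing $I^{d+1}\times I^1=I^{d+2}$ in the case $P_L=I^1$.

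The reverse direction is then immediate: each polytope in the statement is a flag nestohedron, since the cube $I^n$ is a flag nestohedron via an explicit connected building set on $[n+1]$, the polytopes $P_5$, $P_6$, $Q_2^3$ have explicit building sets recorded in~\cite[Theorem 2.16]{LS}, and the class of flag nestohedra is closed under Cartesian product; by~\cite{BV} they are therefore all 2-truncated cubes, so in particular their nerves $\partial P_K^{*}=\Bier(K)$ are flag. The main obstacle I anticipate is the faithful identification of the seed Bier spheres of cases~(5) and~(6) with $\partial(Q_2^3)^{*}$: while Example~\ref{BierOneDimExample} settles the one-dimensional seeds at once, the two-dimensional seeds genuinely rely on the enumeration of two-dimensional Bier spheres from~\cite[Theorem 2.16]{LS} together with the recognition of the resulting combinatorial 2-sphere as the nerve of $Q_2^3$. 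Once that identification is in hand, the classification of the four families is a routine bookkeeping argument through the cube-product translation of Lemma~\ref{BierBasicPropertiesLemma}(c).
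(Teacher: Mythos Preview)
Your proposal is correct and follows essentially the same route as the paper's own proof: both arguments combine Lemma~\ref{FlagBierClassificationLemma} with the low-dimensional classifications (Example~\ref{BierOneDimExample} for $m=3$ and \cite[Theorem 2.16]{LS} for $m=4$) and the standard join-to-product translation $K_{P_1}\ast K_{P_2}=K_{P_1\times P_2}$. Your write-up is simply more explicit---you spell out Lemma~\ref{BierBasicPropertiesLemma}(c), treat the $m=2$ boundary case, and justify the reverse direction via the flag nestohedron property---whereas the paper compresses all of this into a one-sentence appeal to those same ingredients.
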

\begin{proof}
This follows directly from the classification of 1-dimensional Bier spheres, see Example~\ref{BierOneDimExample}, classification of 2-dimensional Bier spheres, see~\cite[Theorem 2.16]{LS}, Lemma~\ref{FlagBierClassificationLemma}, and an observation that if $K=K_{P_1}\ast K_{P_2}$, then $K=K_P$ for $P=P_1\times P_2$, where $P_1$ and $P_2$ are arbitrary simple polytopes, see~\cite[Example 2.2.9.4]{TT}. This finishes the proof.  
\end{proof}

The previous result solves~\cite[Problem 4.9]{LS} in the flag case. As far as we know, the general case remains open.

\begin{corollary}\label{NPconjectureBierCorollary}
The Nevo-Petersen conjecture holds in the class of flag Bier spheres.    
\end{corollary}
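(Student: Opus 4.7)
The plan is to combine the classification in Theorem~\ref{FlagBierPolytopeClassificationThm} with the theory of 2-truncated cubes from~\cite{BV} and the main theorem of~\cite{AV}. Theorem~\ref{FlagBierPolytopeClassificationThm} asserts that every flag Bier sphere has the form $\partial P_K^*$ for some $P_K$ on the short list $I^n$, $I^n\times P_5$, $I^n\times P_6$, $I^n\times Q_2^3$, so it suffices to handle these four families. The Buchstaber--Volodin theorem~\cite{BV}, recorded in the introduction, identifies flag nestohedra with 2-truncated cubes; I would also verify this directly on the list at hand, since the pentagon $P_5$ and the hexagon $P_6$ arise from the square $I^2$ by cutting one and two of its (codimension-two) vertices, $Q_2^3$ is a 2-truncated cube by its very definition, and the class of 2-truncated cubes is manifestly closed under Cartesian product with a cube.

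Next I would invoke the main theorem of~\cite{AV}, which establishes the Nevo--Petersen conjecture for the nerve complexes of 2-truncated cubes: the $\gamma$-vector of each such nerve is realized as the $f$-vector of an explicit flag simplicial complex. Since $K_{P_K}=\partial P_K^*=\Bier(K)$ by Theorem~\ref{FlagBierPolytopeClassificationThm}, this immediately exhibits $\gamma(\Bier(K))$ as the $f$-vector of a flag simplicial complex, yielding Corollary~\ref{NPconjectureBierCorollary}.

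The only subtlety I anticipate is bookkeeping rather than conceptual, and concerns the product structure in Theorem~\ref{FlagBierPolytopeClassificationThm}: the nerve of $P_1\times P_2$ is the join $K_{P_1}\ast K_{P_2}$, so one must check that the flag realizers produced on the factors by~\cite{AV} assemble into a flag realizer of the join. This reduces to the standard observation that the $\gamma$-polynomial is multiplicative under join of flag spheres while the class of $f$-vectors of flag simplicial complexes is closed under the corresponding product operation on generating polynomials, which allows the four families of Theorem~\ref{FlagBierPolytopeClassificationThm} to be treated uniformly rather than case by case.
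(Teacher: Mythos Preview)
Your proposal is correct and follows essentially the same route as the paper: classify via Theorem~\ref{FlagBierPolytopeClassificationThm}, identify the resulting polytopes as 2-truncated cubes via~\cite{BV}, and then apply~\cite{AV}. The only difference is that your final paragraph is an unnecessary precaution: since each product $I^n\times P_5$, $I^n\times P_6$, $I^n\times Q_2^3$ is itself a 2-truncated cube, \cite{AV} applies to it directly and no separate assembly of realizers across join factors is needed.
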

\begin{proof}
It remains to observe that all the simple polytopes in Theorem~\ref{FlagBierPolytopeClassificationThm} are flag nestohedra due to~\cite[Theorem 2.16]{LS} and~\cite[Lemma 1.5.20]{TT}, hence the statement follows from~\cite[Theorem 10]{AV}, since any flag nestohedron is a 2-truncated cube, see~\cite{BV}. Q.E.D.   
\end{proof}

\section{Murai spheres and the Nevo-Petersen conjecture}

In this section we classify all flag Murai spheres and show that the Nevo-Petersen conjecture holds for them. 

Let us start with the classification of all flag Murai spheres $\Bier_c(M)$ in dimensions less than two, that is the case when $|c|\leq 3$.

\begin{example}\label{OneDimFlagMuraiClassificationExample}
In case $|c|=1$ the only Murai sphere is the empty set. In case $|c|=2$ the only Murai sphere is the $0$-dimensional sphere $S^0$, which is a flag complex. Thus, suppose $|c|=3$. In the formulae below a disjoint union with an empty set means that a simplicial complex on $[3]$ has a ghost vertex.

Let $c=(3)$. Then one has the following list of flag Murai spheres:
\begin{itemize}
\item $M=\langle x\rangle=M^\vee$. Then
$$
\Bier_{c}(M)=\Bier_{c}(M^\vee)=\Bier(\Delta^1\sqcup\varnothing)=\partial P_4.
$$
\end{itemize}
Let $c=(2,1)$. Then one has the following list of flag Murai spheres:
\begin{itemize}
\item $M=\langle x\rangle$ and $M^\vee=\langle x^2, y\rangle$. Then 
$$
\Bier_{c}(M)=\Bier_{c}(M^\vee)=\Bier(\ast\sqcup \varnothing\sqcup \varnothing)=\partial P_4;
$$ 
\item $M=\langle y\rangle$ and $M^\vee=\langle xy\rangle$. Then 
$$
\Bier_{c}(M)=\Bier_{c}(M^\vee)=\Bier(\Delta^1\sqcup\varnothing)=\partial P_4;
$$ 
\item $M=\langle x, y\rangle=M^\vee$. Then 
$$
\Bier_{c}(M)=\Bier_{c}(M^\vee)=\Bier(\ast\sqcup\ast\sqcup\varnothing)=\partial P_5;
$$ 
\end{itemize}
Let $c=(1,1,1)$. Then one has the following list of flag Murai spheres:
\begin{itemize}
\item $M=\langle x\rangle$ and $M^\vee=\langle xy, xz\rangle$. Then 
$$
\Bier_{c}(M)=\Bier_{c}(M^\vee)=\Bier(\ast\sqcup\varnothing\sqcup\varnothing)=\partial P_4;
$$
\item $M=\langle xy\rangle=M^\vee$. Then 
$$
\Bier_{c}(M)=\Bier_{c}(M^\vee)=\Bier(\Delta^1\sqcup\varnothing)=\partial P_4;
$$
\item $M=\langle x, y\rangle$ and $M^\vee=\langle z, xy\rangle$. Then 
$$
\Bier_{c}(M)=\Bier_{c}(M^\vee)=\Bier(\ast\sqcup\ast\sqcup\varnothing)=\partial P_5;
$$
\item $M=\langle x, y, z\rangle=M^\vee$. Then
$$
\Bier_{c}(M)=\Bier_{c}(M^\vee)=\Bier(\ast\sqcup\ast\sqcup\ast)=\partial P_6.
$$
\end{itemize}
Thus, the only 1-dimensional flag Murai spheres are the boundaries of either a 4-gon, or a 5-gon, or a 6-gon.
\end{example}

Recall that given a monomial ideal $I$ in the polynomial algebra $\Bbbk[m]$, we denote by $G(I)$ the unique minimal set of monomial generators of $I$. The next result is a useful tool in the classification of high-dimensional flag Murai spheres. 

\begin{lemma}\label{IdealGeneratorsLemma}
For any proper $c$-multicomplex $M$ one has:
$$
G(I_c(M))=\{(x^a)^c\,|\,x^a\in\max(M^\vee)\}=\min(M)
$$
and 
$$
G(I_c(M)^\vee)=\{(x^a)^c\,|\,x^a\in\max(M)\}=\min(M^\vee).
$$
\end{lemma}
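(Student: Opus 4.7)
The plan is to isolate the two pieces of structure at work: the involution $x^a\mapsto (x^a)^c$ on the set of $c$-monomials, and the standard description of the minimal generating set of a monomial ideal. Everything reduces to chasing these through the definitions.

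First I would record the key properties of the duality map $\phi\colon x^a\mapsto (x^a)^c$. Since $a_i\mapsto c_i-a_i$ is its own inverse on $\{0,\dots,c_i\}$, the map $\phi$ is an involution on $c$-monomials, and because $a_i\leq a'_i$ for all $i$ iff $c_i-a_i\geq c_i-a'_i$ for all $i$, the involution $\phi$ \emph{reverses} the divisibility order. From the definition of $M^{\vee}$, applying $\phi$ gives a bijection between the elements of $M^{\vee}$ and the $c$-monomials not in $M$. Combined with order-reversal, this shows
\[
\phi(\max(M^{\vee}))=\min\{x^b : x^b \text{ a } c\text{-monomial with } x^b\notin M\}=\min(M),
\]
which is exactly the equality $\{(x^a)^c : x^a\in\max(M^{\vee})\}=\min(M)$.

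Next I would identify $G(I_c(M))$ with $\min(M)$ directly. By definition $I_c(M)$ is generated by all $c$-monomials $x^b\notin M$, so its minimal monomial generators are the divisibility-minimal such $c$-monomials. Indeed, if $x^b$ is a minimal generator of $I_c(M)$, then some $c$-monomial $x^e\notin M$ divides $x^b$; minimality forces $x^e=x^b$, so $x^b$ is itself a $c$-monomial not in $M$. Moreover every proper divisor $x^{b'}$ of $x^b$ fails to lie in $I_c(M)$, so $x^{b'}$ (being a $c$-monomial) must lie in $M$. This is precisely the defining property of $x^b\in\min(M)$, and conversely every element of $\min(M)$ is trivially a minimal generator. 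Hence $G(I_c(M))=\min(M)$, completing the first displayed equality.

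For the second displayed line the trick is to invoke the identity $(I_c(M))^{\vee}=I_c(M^{\vee})$ recorded earlier in Section~2, and then apply the already established equality to $M^{\vee}$ in place of $M$:
\[
G(I_c(M)^{\vee})=G(I_c(M^{\vee}))=\min(M^{\vee})=\{(x^a)^c : x^a\in\max(M^{\vee\vee})\}=\{(x^a)^c : x^a\in\max(M)\},
\]
using $M^{\vee\vee}=M$, which is immediate from $\phi^2=\mathrm{id}$. I do not expect a serious obstacle here; the only delicate point is being careful that divisors of a $c$-monomial are automatically $c$-monomials, so passing between ``minimal monomial in $I_c(M)$'' and ``minimal non-element of $M$'' causes no difficulty.
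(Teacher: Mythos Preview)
Your proof is correct and follows essentially the same approach as the paper: reduce the second identity to the first via $(M^\vee)^\vee=M$ and $I_c(M^\vee)=I_c(M)^\vee$, and obtain the first directly from the definitions. The paper compresses the latter into a single sentence (``follows directly from the definitions''), whereas you spell out the two ingredients---that $\phi$ is an order-reversing involution taking $\max(M^\vee)$ to $\min(M)$, and that $G(I_c(M))=\min(M)$---which is exactly the content hiding behind that sentence.
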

\begin{proof}
The two identities above are equivalent, since 
$$
(M^\vee)^\vee=M\text{ and }I_c(M^\vee)=I_c(M)^\vee.
$$
The first identity follows directly from the definitions. Q.E.D. 
\end{proof}

Now we are ready to state and prove the classification of flag Murai spheres in dimensions greater than one, when they differ from Bier spheres.

\begin{lemma}\label{FlagMuraiMainCaseLemma}
Let $M$ be a proper $c$-multicomplex with $c\in \N^m$ and $|c|>m, |c|\geq 4$. Then $\Bier_c(M)$ is flag if and only if it is combinatorially equivalent to a Murai sphere over a multicomplex generated by one of the following sets of monomials in $\Bbbk[m]$: 
\begin{enumerate}
\item $\{(x_i^{c_i})^c\}$ and $c_i=2$ for some $1\leq i\leq m$, $c_j=1$ for all $1\leq j\neq i\leq m$;

\item $\{(x_i^{c_{i}-1})^c\}$ and $c_i=3$ for some $1\leq i\leq m$, $c_p=1$ for all $1\leq p\neq i\leq m$;

\item $\{(x_i^{c_i})^c, (x_jx_i)^c\}$ and $c_i=2$ for some $1\leq i\leq m$, $c_p=1$ for all $1\leq p\neq i\leq m$;

\item $\{(x_i^{c_i})^c, (x_kx_j)^c\}$ and $c_i=2$ for some $1\leq i\leq m$, $c_p=1$ for all $1\leq p\neq i\leq m$;

\item $\{(x_i^{c_i})^c, (x_j^{c_j})^c\}$ and $c_i=2$, $c_j\leq 2$ for some $1\leq i\neq j\leq m$, $c_p=1$ for all $1\leq p\neq i,j\leq m$.
\end{enumerate}
\end{lemma}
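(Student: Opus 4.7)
The strategy is to convert flagness of $\Bier_c(M)$ into a combinatorial constraint on $\min(M)$ and $\min(M^\vee)$ via Theorem~\ref{FaceIdealMuraiThm} and Lemma~\ref{IdealGeneratorsLemma}, and then to exploit the hypothesis $|c|>m$ to force $M$ into one of the five tabulated shapes.

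First, I would write
$$
I_{\Bier_c(M)} = \pol(I_c(M)) + \pol^{\ast}(I_c(M^\vee)) + \pol(x_1^{c_1+1},\ldots,x_m^{c_m+1}),
$$
and observe that every monomial produced by the three ingredients is squarefree, so flagness is equivalent to requiring that every minimal generator of the sum ideal has degree $2$. Using Lemma~\ref{IdealGeneratorsLemma} to identify $G(I_c(M))=\min(M)$ and $G(I_c(M^\vee))=\min(M^\vee)$, the problem becomes: every element of $\min(M)\cup\min(M^\vee)$ polarizes to a monomial of degree $\le 2$ or is redundant modulo the quadratic part, and for each $i$ with $c_i\ge 2$ the ``boundary block'' $x_{i,0}x_{i,1}\cdots x_{i,c_i}$ is itself redundant.

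The key leverage comes from $|c|>m$: some $c_i\ge 2$, and the corresponding boundary block has degree $c_i+1\ge 3$ with support in $\{x_{i,0},\ldots,x_{i,c_i}\}$, so any quadratic divisor must also be supported on this single block. Inspecting the polarization formulas, the only such quadratics are $\pol(x_i^2)=x_{i,0}x_{i,1}$ and $\pol^{\ast}(x_i^2)=x_{i,c_i-1}x_{i,c_i}$, so $x_i^2\in\min(M)\cup\min(M^\vee)$ for every such $i$. The reversal involution $x_{i,j}\leftrightarrow x_{i,c_i-j}$ on the polarization variables exchanges $\pol$ with $\pol^{\ast}$ and simultaneously interchanges the roles of $M$ and $M^\vee$ in the Stanley--Reisner ideal, yielding $\Bier_c(M)\cong\Bier_c(M^\vee)$; so after applying this involution if necessary I may assume $x_i^2\in\min(M)$ for at least one $i$ with $c_i\ge 2$. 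Next I would bound each $c_i$: if $c_i\ge 4$, the boundary block has degree $\ge 5$ and neither candidate quadratic can reduce it even jointly, ruling this case out; and if $c_i=3$, both $\pol(x_i^2)$ and $\pol^{\ast}(x_i^2)$ are forced, pinning $x_i^2\in\min(M)\cap\min(M^\vee)$ and forcing $c_p=1$ for every other $p$, which is case (2). When $c_i=2$, a parallel analysis of the higher-degree generators in $\pol(I_c(M))$ and $\pol^{\ast}(I_c(M^\vee))$ shows that $\min(M)\cup\min(M^\vee)$ consists only of quadratics of the form $x_i^2$, $x_j^2$ (permitted only if $c_j\ge 2$), $x_ix_j$, or $x_jx_k$, which—via the correspondence of Lemma~\ref{IdealGeneratorsLemma}—yields exactly cases (1), (3), (4), (5).

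The main obstacle is the coordinated bookkeeping across blocks and between the two polarizations: because $\pol$ uses ``bottom'' variables of each block while $\pol^{\ast}$ uses ``top'' ones, the quadratics they produce are disjoint unless $c_i=2$, and this dichotomy has to be tracked throughout the argument. Sufficiency in each of the five tabulated families is a routine verification: one computes $\pol(I_c(M))$, $\pol^{\ast}(I_c(M^\vee))$, and the boundary term, and checks that after pruning redundant generators only squarefree quadratics remain.
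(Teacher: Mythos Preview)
Your overall strategy matches the paper's: translate flagness into degree constraints on the generators of $I_{\Bier_c(M)}$ via Theorem~\ref{FaceIdealMuraiThm} and Lemma~\ref{IdealGeneratorsLemma}, then run a case analysis. The reversal involution giving $\Bier_c(M)\cong\Bier_c(M^\vee)$ is correct and useful (the paper uses it implicitly in its ``w.l.o.g.'' reductions).

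However, your key bounding step is wrong as written. For $c_i\ge 4$ you say ``the boundary block has degree $\ge 5$ and neither candidate quadratic can reduce it even jointly.'' But redundancy of a generator means \emph{divisibility}, and a single quadratic $\pol(x_i^2)=x_{i,0}x_{i,1}$ already divides $\pol(x_i^{c_i+1})=x_{i,0}\cdots x_{i,c_i}$, so the boundary block \emph{is} redundant. The actual obstruction lies elsewhere. First, for $c$-monomials $x^a,x^b$ the supports of $\pol(x^a)$ and $\pol^\ast(x^b)$ are disjoint (bottom vs.\ top variables of each block), so no element of $\pol(\min(M))\cup\pol^\ast(\min(M^\vee))$ is ever redundant---your escape clause ``or is redundant modulo the quadratic part'' never fires, and flagness forces $\deg\le 2$ on all of $\min(M)\cup\min(M^\vee)$ outright. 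Now if, say, $x_i^2\in\min(M)$, then every $x^a\in\max(M)$ has $a_i\le 1$, hence by Lemma~\ref{IdealGeneratorsLemma} every element $(x^a)^c\in\min(M^\vee)$ has $x_i$-degree $\ge c_i-1$; the degree-$\le 2$ bound then forces $c_i\le 3$. (Equivalently: for $c_i\ge 4$ the ``interior'' variables $x_{i,j}$, $2\le j\le c_i-2$, lie in no generator of degree $\le 2$ whatsoever, contradicting that every vertex of a sphere lies in some minimal non-face.) This same mechanism, specialised to $c_i=3$, is what genuinely pins down both $x_i^2\in\min(M)$ and $x_i^2\in\min(M^\vee)$; it does not follow from the boundary-block argument alone. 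You also silently drop the linear generators $\pol(x_i)=x_{i,0}$ and $\pol^\ast(x_i)=x_{i,c_i}$ (ghost variables of $M$ or $M^\vee$); the paper treats this case first, and you need to as well.
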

\begin{proof}
Suppose either $M$ or $M^\vee$ has a ghost variable. W.l.o.g. we can assume $x_m\notin M$. 

Then $x_{m}\in G(I_c(M))$ and 
$$
(x_m)^c = (x_1^{c_1}\cdot\ldots\cdot x_{m-1}^{c_{m-1}}x_m^{c_{m}-1})\in \max(M^\vee).
$$

Since $1\in M$, $M^\vee$ is proper and it follows that 
$$
p(x_1,\ldots,x_{m-1})x_m^{c_m}\in G(I_c(M^\vee))
$$
for a certain monomial $p(x_1,\ldots,x_{m-1})$. The degree of its $\ast$-polarization is equal to $\deg(p) + c_m$ and hence, as $\Bier_c(M)$ is flag, this yields that either $I_c(M^\vee)=(x_m^2), c_m=2$ or $I_c(M^\vee)=(x_m), c_m=1$, or $p=x_i, c_m=1$ for some $1\leq i\leq m-1$.

In the first case, that is:
$$
I_c(M^\vee)=(x_m^2), c_m=2
$$
one has:
$$
x_1^{c_1}\cdot\ldots\cdot x_{m-1}^{c_{m-1}}=(x_m^2)^c\in \max(M),
$$
that is, since $x_m\notin M$, 
$$
\max(M)=\{x_1^{c_1}\cdot\ldots\cdot x_{m-1}^{c_{m-1}}\}\text{ and }c_m=2.
$$ 
Therefore 
$$
I_c(M)=(x_m)
$$
and 
$$
\max(M^\vee)=\{x_1^{c_1}\cdot\ldots\cdot x_{m-1}^{c_{m-1}}x_m\}.
$$ 
Since $\Bier_c(M)$ is flag, polarizations of $x_i^{c_i+1}$ for $1\leq i\leq m-1$ must have degree $2$, therefore 
$$
c_i=1\text{ for }1\leq i\leq m-1\text{ and }c_m=2.
$$
The statement holds.

In the second case, that is:
$$
I_c(M^\vee)=(x_m)
$$
we have
$$
\max(M^\vee)=\{x_1^{c_1}\cdot\ldots\cdot x_{m-1}^{c_{m-1}}\}.
$$
Therefore
$$
I_c(M)=(x_m)
$$
and 
$$
\max(M)=\{x_1^{c_1}\cdot\ldots\cdot x_{m-1}^{c_{m-1}}\}.
$$
Since $\Bier_c(M)$ is flag, polarizations of $x_i^{c_i+1}$ for $1\leq i\leq m-1$ must have degree $2$, therefore 
$$
c_i=1\text{ for }1\leq i\leq m.
$$

In the third case, that is:
$$
p=x_{i}, c_m=1
$$
one has:
$$
(x_{i}x_m)^c\in \max(M).
$$

It follows that
$$
I_c(M)=(x_m,(x_m)^c)
$$
we have
$$
\max(M)=\{x_1^{c_1}\cdot\ldots x_i^{c_{i}-1}\cdot x_{m-1}^{c_{m-1}}\}
$$
and therefore 
$$
\deg(x_1^{c_1}\cdot\ldots\cdot x_{m-1}^{c_{m-1}})=|c|-c_m=|c|-1\leq 2,  
$$
hence
$$
|c|\leq 3, c_m=1.
$$

Now, assume neither $M$, nor $M^\vee$ has ghost variables.
Suppose $c_m\geq 2$. 

First, suppose $x_m^{c_m}\notin M$ and $x_m^{c_m}\notin M^\vee$. 

There exists $x_m^{a}\in\min(M), a\geq 2$ ($x_m$ is not a ghost variable) and $\min(M)=G(I_c(M))$. Hence $a=2$ and 
$$
x_m^2\in G(I_c(M))
$$
and similarly,
$$
x_m^2\in G(I_c(M^\vee))
$$

It implies that 
$$
(x_m^2)^c\in\max(M^\vee)
$$ 
and similarly,
$$
(x_m^2)^c\in\max(M).
$$ 

Since $\Bier_c(M)$ is flag, it follows that $c_m\leq 3$.

In case $c_m=3$, we get: 
$$
I_c(M^\vee)=(x_m^2).
$$
Thus,
$$
\max(M)=\{x_1^{c_{1}}\ldots x_{m-1}^{c_{m-1}}x_m\}
$$
and 
$$
I_c(M)=(x_m^2).
$$
Because of the polarizations of $x_i^{c_{i}+1}$ in the Stanley-Reisner ideal of the flag sphere $\Bier_c(M)$ one has
$$
c_i=1\text{ for all }1\leq i\leq m-1.
$$

In case $c_m=2$, we get:
$$
(x_m^2)^c=x_1^{c_{1}}\ldots x_{m-1}^{c_{m-1}}\in\max(M^\vee)
$$
and similarly
$$
(x_m^2)^c=x_1^{c_{1}}\ldots x_{m-1}^{c_{m-1}}\in\max(M).
$$

If $I_c(M)=(x_m^2)$, then 
$$
x_1^{c_{1}}\ldots x_{m-1}^{c_{m-1}}x_m\in M,
$$
which contradicts
$$
(x_m^2)^c=x_1^{c_{1}}\ldots x_{m-1}^{c_{m-1}}\in\max(M).
$$

If $x_ix_m\in G(I_c(M))$, then the elements in $\max(M)$ dividing $x_m$ must have degree $\leq |c|-1-c_i$ and $\geq |c|-2$, hence $c_i=1$. Therefore, if 
$$
I_c(M)=(x_ix_m, x_m^2),
$$
then $c_i=1, c_m=2$ and 
$$
\max(M)=\{(x_m^2)^c, (x_ix_m)^c\}.
$$
In this case, 
$$
I_c(M)=I_c(M^\vee).
$$
Because of the polarizations of $x_i^{c_{i}+1}$ in the Stanley-Reisner ideal of the flag sphere $\Bier_c(M)$ one has
$$
c_i=1\text{ for all }1\leq i\leq m-1.
$$

Now, suppose w.l.o.g. that $x_m^{c_m}\in M$, then polarization of $x_m^{c_m+1}$ must divide an element in $G(I_c(M))\cup G(I_c(M^\vee))$ and therefore, by Lemma~\ref{IdealGeneratorsLemma}:
$$
(x_m^{c_m})^c\in M.
$$

In case 
$$
(x_m^{c_m})^c\in \max(M),
$$
its degree must be $|c|-c_m\geq |c|-2$ and so $c_m\leq 2$. Thus 
$$
x_m^{c_m}\in M\text{ and } c_m=2.
$$

If $x_ix_m\in G(I_c(M))$, then the elements in $\max(M)$ dividing $x_m$ must have degree $\leq |c|-c_i$ and $\geq |c|-2$, hence $c_i\leq 2$. Therefore, if 
$$
I_c(M)=(x_ix_m, x_jx_m),
$$
then $c_i=c_j=1, c_m=2$ and 
$$
\max(M)=\{(x_m^2)^c, (x_ix_j)^c\}.
$$
Therefore,
$$
I_c(M^\vee)=(x_m^2, x_ix_j).
$$
Because of the polarizations of $x_i^{c_{i}+1}$ in the Stanley-Reisner ideal of the flag sphere $\Bier_c(M)$ one has
$$
c_k=1\text{ for all }1\leq k\leq m-1.
$$

If 
$$
I_c(M)=(x_ix_m),
$$
then $c_i\leq 2$, $c_m=2$ and 
$$
\max(M)=\{(x_m^2)^c, (x_i^{c_i})^c\}.
$$
Therefore,
$$
I_c(M^\vee)=(x_m^2, x_i^{c_i}).
$$
Because of the polarizations of $x_i^{c_{i}+1}$ in the Stanley-Reisner ideal of the flag sphere $\Bier_c(M)$ one has
$$
c_k=1\text{ for all }1\leq k\neq i\leq m-1.
$$

In case 
$$
x_1^{c_1}\ldots x_{m-1}^{c_{m-1}}x_m^{a_m}\in\max(M)\text{ for some }a_m\geq 1,
$$
all elements in $G(I_c(M))$ divide the monomials of degree $\geq 3$ of the type $x_m^{a_{m}+1}x_i$ for certain $1\leq i\leq m-1$, because $x_m^{c_m}\in M$, which contradicts the flag-ness of $\Bier_c(M)$. 
\end{proof}

Taking into account all the previously obtained classification results, we get the next general statement. When a Murai sphere $\Bier_c(M)$ is polytopal we denote by $P_{M,c}$ the corresponding convex simple polytope so that
$$
\Bier_c(M)=\partial P^*_{M,c}.
$$

\begin{theorem}\label{FlagMuraiPolytopeClassificationThm}
A Murai sphere $\Bier_c(M)$ is flag if and only if it is polytopal with $P_{M,c}$ being a flag nestohedron of one of the following types:
\begin{enumerate}
    \item $P_{M, c}=I^n$ for $n\geq 1$;
    \item $P_{M, c}=I^n\times P_5$ for $n\geq 0$;
    \item $P_{M, c}=I^n\times P_6$ for $n\geq 0$;
    \item $P_{M, c}=I^n\times Q_2^3$ for $n\geq 0$, 
\end{enumerate}
where $P_m$ for $m\geq 3$ denotes the $m$-gon and $Q_2^3$ denotes the 3-dimensional simple polytope, which can be obtained from the 3-dimensional cube $I^3$ by two adjacent edge cuts.
\end{theorem}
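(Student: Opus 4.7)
The plan is to follow the strategy of the proof of Theorem \ref{FlagBierPolytopeClassificationThm}, combining the low-dimensional classification of Example \ref{OneDimFlagMuraiClassificationExample}, the proper Murai sphere classification of Lemma \ref{FlagMuraiMainCaseLemma}, the Bier case supplied by Theorem \ref{FlagBierPolytopeClassificationThm}, and the dictionary $K_{P_1}\ast K_{P_2}=K_{P_1\times P_2}$ from \cite[Example 2.2.9.4]{TT}. Concretely, I split the argument into three regimes: (a) $|c|\leq 3$, handled by Example \ref{OneDimFlagMuraiClassificationExample}, which identifies every flag Murai sphere with the boundary of $P_4=I^2$, $P_5$, or $P_6$; (b) $c=(1,\ldots,1)$, where $\Bier_c(M)$ is a classical Bier sphere and Theorem \ref{FlagBierPolytopeClassificationThm} applies verbatim, contributing in particular the factors $P_6$ and $Q_2^3$; (c) $|c|\geq 4$ with some $c_i\geq 2$, which is exactly the hypothesis of Lemma \ref{FlagMuraiMainCaseLemma}.

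In regime (c) I would process each of the five families of multicomplexes listed in Lemma \ref{FlagMuraiMainCaseLemma} in turn. Using Theorem \ref{FaceIdealMuraiThm} together with Lemma \ref{IdealGeneratorsLemma}, one writes $I_{\Bier_c(M)}=\pol(I_c(M))+\pol^\ast(I_c(M^\vee))+\pol(x_1^{c_1+1},\ldots,x_m^{c_m+1})$ explicitly. The key observation, verified by direct inspection in each family, is that for every index $i$ with $c_i \geq 2$ the monomial $x_i^2$ lies in $G(I_c(M))$ or in $G(I_c(M^\vee))$; hence the ghost polarization $\pol(x_i^{c_i+1})=x_{i,0}x_{i,1}\cdots x_{i,c_i}$ is divisible by the quadratic generator $x_{i,0}x_{i,1}$ or $x_{i,c_i-1}x_{i,c_i}$, and is therefore redundant. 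It follows that $\Bier_c(M)$ is flag and that the full list of two-element minimal non-faces can be read off the remaining quadratic polarizations.

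Inspecting this non-edge list in each family reveals a join decomposition. In families (1), (2), and (5) of Lemma \ref{FlagMuraiMainCaseLemma}, the non-edges partition the vertex set of $\Bier_c(M)$ into $n+1$ disjoint pairs, so $\Bier_c(M)=\partial\diamondsuit^{n+1}=K_{I^{n+1}}$, giving $P_{M,c}=I^{n+1}$. In families (3) and (4), five distinguished vertices (those arising from the unique index with $c_i=2$ together with one of the indices with $c_j=1$ that participates in a non-trivial polarization) form the boundary of a pentagon after a short relabeling, while the remaining $r$ pairs $\{x_{p,0},x_{p,1}\}$ form a cross-polytope boundary; hence $\Bier_c(M)=\partial P_5\ast\partial\diamondsuit^r=K_{P_5\times I^r}$, giving $P_{M,c}=P_5\times I^r$. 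The polytopes $P_6$ and $Q_2^3$ do not appear in regime (c); they enter only through the Bier case (b), consistent with the vertex-count identities $|V(K_{I^n\times P_6})|=2n+6$ and $|V(K_{I^n\times Q_2^3})|=2n+8$, which force $|c|=m$, that is, $c=(1,\ldots,1)$. Finally, all polytopes in the list (1)--(4) are flag nestohedra by \cite[Theorem 2.16]{LS}, \cite[Lemma 1.5.20]{TT}, and \cite{BV}.

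The main obstacle is the bookkeeping in the core-identification step, most delicate for families (3) and (4), where the pentagon structure on the five distinguished vertices is visible only after an explicit relabeling of the blown-up vertices $x_{i,0},x_{i,1},x_{i,2}$ and $x_{j,0},x_{j,1}$. One must also confirm in each family that no cubic polarization survives as a minimal generator. The severe exponent restrictions in Lemma \ref{FlagMuraiMainCaseLemma} ($c_i\leq 3$, at most two indices with $c_i\geq 2$) keep the case analysis finite and force the ``core'' of $\Bier_c(M)$ to sit on at most six vertices, so each identification reduces to comparing a small explicit flag complex with one of the 1-dimensional flag Murai spheres $\partial P_4$, $\partial P_5$, $\partial P_6$ of Example \ref{OneDimFlagMuraiClassificationExample}.
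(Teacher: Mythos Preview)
Your overall strategy coincides with the paper's proof: split into the regimes $|c|\leq 3$ (Example~\ref{OneDimFlagMuraiClassificationExample}), $c=(1,\ldots,1)$ (Theorem~\ref{FlagBierPolytopeClassificationThm}), and $|c|\geq 4$ with $|c|>m$ (Lemma~\ref{FlagMuraiMainCaseLemma}); in the last regime write out $I_{\Bier_c(M)}$ via Theorem~\ref{FaceIdealMuraiThm} in each family and read off a join decomposition into a small core and a cross-polytope factor. Your unifying observation that $x_i^2\in G(I_c(M))\cup G(I_c(M^\vee))$ whenever $c_i\geq 2$, so that $\pol(x_i^{c_i+1})$ is always redundant, is a clean way to phrase what the paper verifies by explicit listing; the conclusions for families (1), (2), (3), and (5) match the paper's.

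There is one concrete slip, in family~(4). The five vertices you name --- the three $x_{i,0},x_{i,1},x_{i,2}$ from the index with $c_i=2$ together with the two from a single index $j$ --- do \emph{not} carry the pentagon: the non-edges $\{x_{i,0},x_{k,0}\}$ and $\{x_{j,1},x_{k,1}\}$ force the third index $k$ into the core, and the remaining pair $\{x_{k,0},x_{k,1}\}$ cannot split off as a suspension factor. The seven vertices $x_{i,0},x_{i,1},x_{i,2},x_{j,0},x_{j,1},x_{k,0},x_{k,1}$ carry six quadratic non-edges, and the correct splitting (the one the paper uses) is the suspension pair $\{x_{i,1},x_{i,2}\}$ joined with the pentagon on $\{x_{i,0},x_{j,0},x_{j,1},x_{k,0},x_{k,1}\}$. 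With this correction your conclusion $P_{M,c}=I^{m-2}\times P_5$ for family~(4) is restored and the rest of the argument goes through unchanged.
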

\begin{proof}
Let $\Bier_c(M)$ be a flag Murai sphere. If either $|c|\leq 3$, or $c=(1,\ldots,1)\in\N^m$ with $m\geq 2$, then the statement is true due to Example~\ref{OneDimFlagMuraiClassificationExample} and Theorem~\ref{FlagBierPolytopeClassificationThm}. 

Suppose $c\in\N^m$ with $|c|\geq 4$ and $|c|>m$. By Lemma~\ref{FlagMuraiMainCaseLemma} we have the following cases.

\begin{enumerate}
\item $\max(M)=\{(x_i^{c_i})^c\}$ and $c_i=2$ for some $1\leq i\leq m$, $c_j=1$ for all $1\leq j\neq i\leq m$;

In this case the Stanley-Reisner ideal has the form:
$$
(x_{i,0})+(x_{i,2}x_{i,1})+(x_{1,0}x_{1,1},\ldots,x_{i-1,0}x_{i-1,1},x_{i+1,0}x_{i+1,1},\ldots,x_{m,0}x_{m,1}).
$$

This Murai sphere is the boundary of the cross-polytope with the vertex set $V=\tilde{X}\setminus\{x_{i}^{(0)}\}$.

\item $\max(M)=\{(x_i^{c_{i}-1})^c\}$ and $c_i=3$ for some $1\leq i\leq m$, $c_p=1$ for all $1\leq p\neq i\leq m$;

In this case the Stanley-Reisner ideal has the form:
$$
(x_{i,0}x_{i,1})+(x_{i,3}x_{i,2})+(x_{1,0}x_{1,1},\ldots,x_{i-1,0}x_{i-1,1},x_{i+1,0}x_{i+1,1},\ldots,x_{m,0}x_{m,1}).
$$

This Murai sphere is the boundary of the cross-polytope with the vertex set $V=\tilde{X}$.

\item $\max(M)=\{(x_i^{c_i})^c, (x_jx_i)^c\}$ and $c_i=2$ for some $1\leq i\leq m$, $c_p=1$ for all $1\leq p\neq i\leq m$;

In this case the Stanley-Reisner ideal has the form:
$$
(x_{i,0}x_{j,0},x_{i,0}x_{i,1})+(x_{i,2}x_{j,1},x_{i,2}x_{i,1})+
(x_{1,0}x_{1,1},\ldots,x_{i-1,0}x_{i-1,1},x_{i+1,0}x_{i+1,1},\ldots,x_{m,0}x_{m,1}).
$$

This Murai sphere is the iterated $(m-2)$-times suspension over $\Bier(\langle \{x_{i}^{(0)}\}, \{x_{i}^{(1)}\}\rangle\sqcup\{\varnothing\})$, which is a 5-gon with the vertex set 
$$
\{x_{i}^{(0)},x_{i}^{(1)},x_{i}^{(2)},x_{j}^{(0)},x_{j}^{(1)}\}.
$$

\item $\max(M)=\{(x_i^{c_i})^c, (x_kx_j)^c\}$ and $c_i=2$ for some $1\leq i\leq m$, $c_p=1$ for all $1\leq p\neq i\leq m$;

In this case the Stanley-Reisner ideal has the form:
$$
(x_{i,0}x_{j,0},x_{i,0}x_{k,0})+(x_{i,2}x_{i,1},x_{j,1}x_{k,1})+
(x_{1,0}x_{1,1},\ldots,x_{i-1,0}x_{i-1,1},x_{i+1,0}x_{i+1,1},\ldots,x_{m,0}x_{m,1}).
$$

This Murai sphere is the iterated $(m-3)$-times suspension over $\Bier(\langle\{x_{i}^{(0)},x_{i}^{(2)}\},\{x_{k}^{(0)},x_{i}^{(2)}\}\rangle\sqcup\{\varnothing\})$, which is a suspension over the 5-gon and has the vertex set
$$
\{x_{i}^{(0)},x_{i}^{(1)},x_{i}^{(2)},x_{j}^{(0)},x_{j}^{(1)},x_{k}^{(0)},x_{k}^{(1)}\}.
$$

\item $\max(M)=\{(x_i^{c_i})^c, (x_j^{c_j})^c\}$ and $c_i=2$, $c_j=2$ for some $1\leq i\neq j\leq m$, $c_p=1$ for all $1\leq p\neq i,j\leq m$.

In this case the Stanley-Reisner ideal has the form:
$$
(x_{i,0}x_{j,0})+(x_{i,2}x_{i,1},x_{j,2}x_{j,1})+
(x_{1,0}x_{1,1},\ldots,x_{i-1,0}x_{i-1,1})+
$$
$$
+(x_{i+1,0}x_{i+1,1},\ldots,x_{j-1,0}x_{j-1,1},x_{j+1,0}x_{j+1,1},\ldots x_{m,0}x_{m,1})
$$

This Murai sphere is the boundary of a cross-polytope with the vertex set $\tilde{X}$. 

\item $\max(M)=\{(x_i^{c_i})^c, (x_j^{c_j})^c\}$ and $c_i=2$, $c_j=1$ for some $1\leq i\neq j\leq m$, $c_p=1$ for all $1\leq p\neq i,j\leq m$.

In this case the Stanley-Reisner ideal has the form:

$$
(x_{i,0}x_{j,0})+(x_{i,2}x_{i,1},x_{j,1})+
(x_{1,0}x_{1,1},\ldots,x_{i-1,0}x_{i-1,1},x_{i+1,0}x_{i+1,1},\ldots x_{m,0}x_{m,1}).
$$

This Murai sphere is the boundary of a cross-polytope with the vertex set $\tilde{X}\setminus\{x_{j}^{(1)}\}$. 
\end{enumerate}

It follows that the statement is also true in the general case. This finishes the proof. 
\end{proof}

\begin{corollary}\label{NPconjectureMuraiCorollary}
The Nevo-Petersen conjecture holds in the class of flag Murai spheres.
\end{corollary}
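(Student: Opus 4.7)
The plan is to reduce the Murai case to the Bier case, exploiting the fact that Theorem~\ref{FlagMuraiPolytopeClassificationThm} produces exactly the same list of dual simple polytopes as Theorem~\ref{FlagBierPolytopeClassificationThm}. More precisely, I would first invoke Theorem~\ref{FlagMuraiPolytopeClassificationThm} to conclude that every flag Murai sphere $\Bier_c(M)$ is combinatorially equivalent to the boundary of the dual of a product polytope of the form $I^n$, $I^n\times P_5$, $I^n\times P_6$, or $I^n\times Q_2^3$. Since the $\gamma$-vector depends only on the combinatorial type of the sphere, and since each polytope on this list also appears in Theorem~\ref{FlagBierPolytopeClassificationThm}, the desired conclusion is an immediate consequence of Corollary~\ref{NPconjectureBierCorollary}.

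For a self-contained presentation I would alternatively repeat the short chain of references used there: each of the polytopes $I^n$, $P_5$, $P_6$, $Q_2^3$ is a flag nestohedron by \cite[Theorem 2.16]{LS}, the class of flag nestohedra is closed under products by \cite[Lemma 1.5.20]{TT}, and by the main result of \cite{BV} flag nestohedra coincide with 2-truncated cubes. Applying~\cite[Theorem 10]{AV} to a 2-truncated cube one obtains that its $\gamma$-vector is the $f$-vector of a flag simplicial complex, and this is precisely the Nevo--Petersen conjecture for the corresponding flag homology sphere.

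I do not expect any real obstacle here; the work has already been absorbed into the classification theorem of Section~4. The only point that deserves a one-line verification is the stability of the conclusion under taking joins with a simplex (equivalently, iterated suspension), which is needed because the polytopes listed involve factors $I^n$: this follows from the fact that suspension multiplies the $h$-polynomial by $(1+t)$ and therefore preserves both the symmetry of the $h$-vector and the $\gamma$-vector, while the dual operation at the level of polytopes is the product with $I$, sending flag nestohedra to flag nestohedra and 2-truncated cubes to 2-truncated cubes. Hence the Nevo--Petersen property propagates from the base cases ($P_5$, $P_6$, $Q_2^3$, and the simplex boundary) to the whole list, completing the argument.
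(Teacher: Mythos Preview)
Your proposal is correct and matches the paper's own proof, which simply says ``the proof is the same as the one for Bier spheres, see Corollary~\ref{NPconjectureBierCorollary}''; your first paragraph is exactly this reduction, and your second paragraph unpacks that corollary's chain of references (\cite[Theorem~2.16]{LS}, \cite[Lemma~1.5.20]{TT}, \cite{BV}, \cite[Theorem~10]{AV}) verbatim. The third paragraph on stability under suspension is a harmless extra remark that the paper does not bother to make, since it is already subsumed by the product/flag-nestohedron argument.
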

\begin{proof}
The proof is the same as the one for Bier spheres, see Corollary~\ref{NPconjectureBierCorollary}. Q.E.D.    
\end{proof}

\section{Bier spheres and face rings}

In this section we recall the notions of a Golod and a minimally non-Golod ring in the case of Stanley-Reisner rings and study the Golod and minimally non-Golod properties of face rings of Bier spheres. 

Let us start with the following key construction.

\begin{construction}
If $K$ is a simplicial complex on $[m]$ and $A=\Bbbk[K]$, then the \emph{Koszul complex} of $A$ is defined to be the differential graded algebra
$$
[K_A,d] = [\Bbbk[K]\otimes_{\Bbbk}\Lambda[u_{1},\ldots,u_{m}],d],
$$
where 
$$
\deg(u_i)=1, \deg(v_i)=2\text{ and }d(u_{i})=v_{i}, d(v_i)=0,\text{ for all }1\leq i\leq m.
$$

Therefore, the Koszul complex of $\Bbbk[K]$ looks as follows:
$$
0\to\Lambda^m[u_{1},\ldots,u_{m}]\otimes_{\Bbbk}\Bbbk[K]\to\ldots\to\Lambda^1[u_{1},\ldots,u_{m}]\otimes_{\Bbbk}\Bbbk[K]\to \Bbbk[K]\to 0,
$$
where $\Lambda^i[u_{1},\ldots,u_{m}]$ is a subspace in $\Lambda[u_{1},\ldots,u_{m}]$ generated by (exterior) monomials of degree $i, 1\leq i\leq m$ and the maps are the $d$ differentials.

Finally, the {\emph{Koszul homology}} or the {\emph{Tor-algebra}} of $\Bbbk[K]$ is the cohomology of the Koszul complex:
$$
\Tor_{\Bbbk[m]}^{\ast}(\Bbbk[K],\Bbbk):=H^{\ast}[\Bbbk[K]\otimes_{\Bbbk}\Lambda[u_{1},\ldots,u_{m}],d].
$$
\end{construction}

The properties of Koszul homology we are especially interested in in this paper are Golodness and minimal non-Golodness. To define these two properties, we need the following notion. Given a simplicial complex $K$ on $[m]$ and $i\in [m]$ we denote by $\del_K(i)$ the full subcomplex in $K$ on $[m]\setminus\{i\}$ and call it the \emph{deletion} of $i$ from $K$.

\begin{definition}
Let $K$ be a simplicial complex with $m$ vertices. Its face ring $\Bbbk[K]$ is called \emph{Golod} if multiplication and all higher Massey products vanish in $\Tor^{+}_{\Bbbk[m]}(\Bbbk[K],
\Bbbk)$. The ring $\Bbbk[K]$ is called \emph{minimally non-Golod} if it is not Golod and the ring $\Bbbk[\del_K(i)]$ is Golod for each $i\in [m]$. We say that $K$ is a \emph{Golod complex} (\emph{minimally non-Golod complex}) if its face ring $\Bbbk[K]$ is Golod (minimally non-Golod) for any field $\Bbbk$, respectively. 
\end{definition}

\begin{remark}
Note that Koszul homology of a minimally non-Golod face ring may contain a non-trivial Massey product, see~\cite[Theorem 2]{LP}.      
\end{remark}

In our classification of minimally non-Golod Bier spheres we are going to make use of the next fundamental result, showing that Koszul homology of a face ring is isomorphic to the direct sum of reduced simplicial cohomology groups of all full subcomplexes. We refer to~\cite{H} as well as \cite[Theorem 3.2.9, Proposition 3.2.10]{TT} for the details.

\begin{theorem}[\cite{H,TT}]\label{BPtheorem}
Let $K$ be a simplicial complex on $[m]$. Then an isomorphism of $\Bbbk$-modules holds:
$$
\Tor^{\ast}_{\Bbbk[m]}(\Bbbk[K],\Bbbk)\cong\bigoplus\limits_{-i+2j=\ast}
\bigoplus\limits_{J\subseteq [m],\,|J|=j}\tilde{H}^{j-i-1}(K_J;\Bbbk).
$$
The product in the r.h.s. induced
by the  above isomorphism coincides up to a sign with the cohomology product induced by the maps of simplicial
cochains
\begin{equation}\label{fullsubcochain}
\begin{aligned}
  \mu\colon C^{p-1}(K_I)\otimes C^{q-1}(K_J)&\longrightarrow C^{p+q-1}(K_{I\cup
  J}),\\
  \alpha_L\otimes \alpha_M\ \ &\longmapsto
    \left\{%
    \begin{array}{ll}
    \alpha_{L\sqcup M}, & \text{if $I\cap J=\varnothing$;} \\
    0, & \hbox{otherwise.} \\
    \end{array}%
    \right.
\end{aligned}
\end{equation}
Here $\alpha_{L\sqcup M}\in C^{p+q-1}(K_{I\sqcup J})$ denotes
the basis simplicial cochain corresponding to $L\sqcup M$ if the
latter is a simplex of $K_{I\sqcup J}$ and zero otherwise.
\end{theorem}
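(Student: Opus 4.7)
The plan is to compute $\Tor^{\ast}_{\Bbbk[m]}(\Bbbk[K],\Bbbk)$ from the Koszul resolution and then exploit the natural $\Z^m$-multigrading to reduce everything to the simplicial cohomology of full subcomplexes. Tensoring the Koszul resolution $\Lambda[u_1,\ldots,u_m]\otimes\Bbbk[m]\twoheadrightarrow\Bbbk$ over $\Bbbk[m]$ with $\Bbbk[K]$ produces the DGA $K_A=\Bbbk[K]\otimes\Lambda[u_1,\ldots,u_m]$ appearing in the statement, and its cohomology is by definition $\Tor^{\ast}_{\Bbbk[m]}(\Bbbk[K],\Bbbk)$. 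Placing both $v_i$ and $u_i$ in $\Z^m$-multidegree $e_i$ makes the differential $d$ multihomogeneous, so $K_A=\bigoplus_{\mathbf{a}\in\Z^m_{\geq 0}}(K_A)_{\mathbf{a}}$, and it suffices to compute each multigraded piece.

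First I would describe $(K_A)_{\mathbf{a}}$ explicitly as the $\Bbbk$-span of monomials $v^{\mathbf{a}-\mathbf{1}_\tau}\otimes u_\tau$ with $\tau\subseteq\mathrm{supp}(\mathbf{a})$ and $\mathrm{supp}(\mathbf{a}-\mathbf{1}_\tau)\in K$. The key observation is that if $\mathbf{a}$ is not squarefree, say $a_i\geq 2$, then $(K_A)_{\mathbf{a}}$ is acyclic: define a $\Bbbk$-linear map $h$ by $h(v^{\mathbf{b}}u_\tau):=\pm v^{\mathbf{b}-e_i}u_{\tau\cup\{i\}}$ with a Koszul sign when $i\notin\tau$, and $0$ when $i\in\tau$. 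The hypothesis $a_i\geq 2$ guarantees that $\mathrm{supp}(\mathbf{b}-e_i)=\mathrm{supp}(\mathbf{b})$, so $h$ stays inside $\Bbbk[K]\otimes\Lambda[u_1,\ldots,u_m]$, and a direct cancellation verifies $dh+hd=\mathrm{id}$.

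For squarefree multidegree $\mathbf{a}=\mathbf{1}_J$, I would identify $(K_A)_{\mathbf{1}_J}$ with the augmented simplicial cochain complex $\tilde{C}^{\ast-1}(K_J;\Bbbk)$ by sending $v^{\mathbf{1}_{J\setminus\tau}}\otimes u_\tau$ to the basis cochain $\alpha_{J\setminus\tau}$ of the simplex $J\setminus\tau$, which belongs to $K_J$ exactly when the monomial survives in $\Bbbk[K]$. Under this correspondence the Koszul differential $d(v^{\mathbf{1}_{J\setminus\tau}}u_\tau)=\sum_{i\in\tau}\pm\,v^{\mathbf{1}_{J\setminus(\tau\setminus i)}}u_{\tau\setminus i}$ becomes (up to the chosen Koszul signs) the simplicial coboundary $\delta$, so $H((K_A)_{\mathbf{1}_J})\cong\tilde{H}^{|J|-|\tau|-1}(K_J;\Bbbk)$ in homological degree $-|\tau|$. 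Setting $i=|\tau|$ and $j=|J|$ and summing over all $J\subseteq[m]$ yields the stated decomposition; the bidegree bookkeeping $-i+2j$ works out because $u_i$ contributes $(-1,2)$ and $v_i$ contributes $(0,2)$ to the (homological, internal) bidegree.

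For the multiplicative comparison I would trace the external product on $K_A$ through the squarefree identification. Since $u_\tau\wedge u_{\tau'}=0$ whenever $\tau\cap\tau'\neq\varnothing$, a product of representatives in multidegrees $\mathbf{1}_I$ and $\mathbf{1}_J$ can be nonzero only when $I\cap J=\varnothing$, in which case it lands in multidegree $\mathbf{1}_{I\sqcup J}$ and corresponds exactly to $\alpha_{L\sqcup M}$ on $K_{I\sqcup J}$, matching the map $\mu$ of~\eqref{fullsubcochain}. The main obstacle I foresee is sign-bookkeeping throughout the last two steps: one must fix a linear order on $[m]$, use it consistently to orient both $u_\tau=u_{i_1}\wedge\cdots\wedge u_{i_k}$ and the cochain $\alpha_{J\setminus\tau}$, and then verify a shuffle identity so that the Koszul product really coincides with the simplicial cup-like product $\mu$ on the nose rather than only up to a globally undetermined sign.
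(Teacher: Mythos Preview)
The paper does not give its own proof of this theorem; it is quoted as a known result with the attribution \cite{H,TT} and the reader is referred to \cite[Theorem~3.2.9, Proposition~3.2.10]{TT} for details. Your outline is essentially the standard argument found in those references: compute $\Tor$ via the Koszul complex, split by $\Z^m$-multidegree, kill the non-squarefree pieces with a contracting homotopy, and identify the squarefree piece in multidegree $\mathbf{1}_J$ with the augmented cochain complex of $K_J$.

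One point in your multiplicative step deserves correction. You justify the vanishing of the product when $I\cap J\neq\varnothing$ by the relation $u_\tau\wedge u_{\tau'}=0$ for $\tau\cap\tau'\neq\varnothing$. That is not enough: at the chain level one can have $\tau\cap\tau'=\varnothing$ while $I\cap J\neq\varnothing$, and then $v^{\mathbf{1}_{I\setminus\tau}}u_\tau\cdot v^{\mathbf{1}_{J\setminus\tau'}}u_{\tau'}$ is a nonzero monomial. The correct argument is that this product has multidegree $\mathbf{1}_I+\mathbf{1}_J$, which is not squarefree when $I\cap J\neq\varnothing$; hence it lies in an acyclic summand and its cohomology class is zero. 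Once you invoke the acyclicity you already established, the rest of your description of $\mu$ goes through and matches \eqref{fullsubcochain} up to the sign bookkeeping you flagged.
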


Now we are ready to state and prove the classification of Golod and minimally non-Golod Bier spheres.

\begin{theorem}
Let $K\neq\Delta_{[m]}$ be a simplicial complex on $[m]$ with $m\geq 3$. The following statements hold.
\begin{itemize}
\item[(a)] $\Bier(K)$ is Golod if and only if either $K=\partial\Delta_{[m]}$ or $K^\vee=\partial\Delta_{[m']}$. In this case $\Bier(K)$ is polytopal and we have
$$
P_K=\Delta^{m-1};
$$
\item[(b)] $\Bier(K)$ is minimally non-Golod if and only if either $K$ or $K^\vee$ equals the set of $\ell$ disjoint points, where $1\leq\ell\leq m$. In this case $\Bier(K)$ is polytopal and $P_K$ is a truncation polytope obtained from $\Delta^{m-1}$ by cutting off $\ell$ of its vertices; that is
$$
P_K=\vc^{\ell}(\Delta^{m-1}).
$$
\end{itemize}
\end{theorem}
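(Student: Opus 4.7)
For part (a), the plan is to combine Lemma~\ref{BierBasicPropertiesLemma}(a) with the classical structural fact that a Gorenstein* simplicial complex is Golod if and only if it is the boundary of a simplex. Since every Bier sphere is a homology sphere and therefore Gorenstein* (see the remark in Section~2), this equivalence shows that $\Bier(K)$ is Golod precisely when $\Bier(K)=\partial\Delta^{m-1}$. Lemma~\ref{BierBasicPropertiesLemma}(a) then translates this into the dichotomy $K=\partial\Delta_{[m]}$ or $K^\vee=\partial\Delta_{[m']}$, and the identification $P_K=\Delta^{m-1}$ follows from the self-duality of the simplex.

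For the ``if'' direction of (b), I would compute $\Bier(K)$ explicitly in the case when $K$ is the set of $\ell$ disjoint points on $[m]$ (with $m-\ell$ ghost vertices). Using $\MF(K)=\{\{i,j\}\,:\,1\leq i<j\leq\ell\}\cup\{\{k\}\,:\,\ell<k\leq m\}$ together with Alexander duality and the $c=(1,\ldots,1)$ specialization of Theorem~\ref{FaceIdealMuraiThm}, one identifies $\Bier(K)$ as the nerve complex of the truncation polytope $\vc^{\ell}(\Delta^{m-1})$. Then I would invoke the known classification of minimally non-Golod polytopal spheres (cf.~\cite{LP} and references therein): nerves of truncation polytopes distinct from the simplex are exactly the minimally non-Golod polytopal spheres. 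Since $\Bier(K)$ and $\Bier(K^\vee)$ are isomorphic simplicial complexes (by swapping $[m]$ and $[m']$), the symmetric case when $K^\vee$ is the $\ell$-point set is handled simultaneously.

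For the ``only if'' direction, I would exploit two structural facts. First, the full subcomplex of $\Bier(K)$ on $[m]$ equals $K$, and the full subcomplex on $[m']$ equals $K^\vee$; both are immediate from the definition of the deleted join. Second, any full subcomplex of a Golod complex is itself Golod (Berglund--J\"ollenbeck). Hence, if $\Bier(K)$ is minimally non-Golod, then for any $v\in[m']$ the complex $\del_{\Bier(K)}(v)$ is Golod, which forces $K$---being a full subcomplex of this deletion---to be Golod as well; symmetrically $K^\vee$ is Golod. Combined with the hypothesis that $\Bier(K)$ itself is not Golod, and with the classification of minimally non-Golod polytopal spheres as nerves of truncation polytopes, this forces $P_K=\vc^{\ell}(\Delta^{m-1})$ for some $\ell\geq 1$. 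Matching this polytopal shape back to the combinatorics of $K$ via Lemma~\ref{BierBasicPropertiesLemma}(a)--(c) and the explicit form of $I_{\Bier(K)}$ then identifies $K$ or $K^\vee$ as the discrete $\ell$-point set.

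The main obstacle is this last matching step: one must control the cohomology products in $\Tor^*_{\Bbbk[m+m']}(\Bbbk[\Bier(K)],\Bbbk)$ across the two qualitatively distinct families of full subcomplexes $\Bier(K)_{I\sqcup J'}$---those with $I\cap J=\varnothing$, which split as joins $K_I\ast (K^\vee)_J$, and those with $I\cap J\neq\varnothing$, which behave like smaller Bier constructions---and extract from the minimal non-Golodness hypothesis, via Theorem~\ref{BPtheorem}, the rigid combinatorial conclusion that one side of the Bier construction must be a discrete point set.
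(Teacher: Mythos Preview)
Your treatment of part~(a) and of the ``if'' direction in~(b) is essentially the same as the paper's: a Gorenstein* sphere is Golod only when it is $\partial\Delta^{m-1}$, Lemma~\ref{BierBasicPropertiesLemma}(a) handles the translation back to $K$, and for the converse in~(b) one identifies $\Bier(K)$ with the boundary of a stacked polytope and cites~\cite{BJ}. No issues there.

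The ``only if'' direction of~(b), however, has a genuine gap. You invoke a ``classification of minimally non-Golod polytopal spheres'' asserting that nerves of truncation polytopes are \emph{exactly} the minimally non-Golod polytopal spheres. No such classification exists, and the statement is false: for instance, nerve complexes of many generalized truncation polytopes in the sense of~\cite{L14} (such as $\vc^k(\Delta^{n_1}\times\Delta^{n_2})$ with $n_1,n_2\geq 2$) are minimally non-Golod but are not boundaries of stacked polytopes. Moreover, you apply this ``classification'' to $\Bier(K)$ without knowing in advance that $\Bier(K)$ is polytopal---polytopality is part of the conclusion, not a hypothesis. Your preliminary observation that $K$ and $K^\vee$ must both be Golod (via inheritance of Golodness to full subcomplexes of $\del_{\Bier(K)}(v)$) is correct and pleasant, but Golodness of $K$ alone is far too weak to force $K$ to be a discrete set of points: there is a vast zoo of Golod complexes. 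So the argument, as written, does not close.

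The paper takes a completely different, hands-on route for this direction. Assuming $m\geq 5$ and that $K$ contains an edge, say $\{1,2\}$, it repeatedly exhibits explicit induced $4$- or $5$-cycles inside various deletions $\del_{\Bier(K)}(v)$---for example the cycle on $\{1,2,1',2'\}$ if $\{1',2'\}\in K^\vee$, or the cycle on $\{1,2,1',i,2'\}$ if $\{1,i\},\{2,i\}\notin K$---and uses the product formula~\eqref{fullsubcochain} from Theorem~\ref{BPtheorem} to produce a nontrivial cup product in the Koszul homology of that deletion, contradicting minimal non-Golodness. Iterating this forces $\sk^1(K)$ to be the complete graph and $K^\vee$ to have no edges, hence $K^\vee$ is a discrete point set. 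This bare-hands combinatorics is exactly what your final paragraph flags as the ``main obstacle''; the paper resolves it not by any abstract classification but by these explicit cycle constructions.
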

\begin{proof}
To prove statement (a) note that a triangulated sphere is Golod if and only if it is a boundary of a simplex, since otherwise Koszul homology of its face ring has a non-trivial product. Then Lemma~\ref{BierBasicPropertiesLemma} (a) finishes the proof.

Let us prove statement (b). For $m=3$ it follows from Example~\ref{BierOneDimExample} and for $m=4$ it follows from~\cite[Theorem 2.16]{LS}. Therefore let $m\geq 5$ and $\Bier(K)$ be minimally non-Golod. Note that 
$$
f_0(\Bier(K))\geq m\geq 5.
$$

Suppose $\sk^1(K)$ contains an edge. W.l.o.g. we can assume that $\{1,2\}\in K$. If $\{1',2'\}\in K^\vee$ the deletion complex $\del_{\Bier(K)}(i)$ contains a 4-cycle labeled clockwise by $\{1,2,1',2'\}$ as a full subcomplex, for any $i\in [m]\setminus\{1,2\}$. By formula (\ref{fullsubcochain}), the face ring $\Bbbk[\del_{\Bier(K)}(i)]$ has a non-trivial product in Koszul homology and hence is non-Golod. Therefore $\{1',2'\}\notin K^\vee$.

Observe that if $\{i\}\notin K$ for $i\geq 3$, then by definition of Alexander duality we have $\{1',2'\}\subset [m']\setminus\{i'\}\in K^\vee$, whereas $\{1',2'\}\notin K^\vee$. This contradiction yields $m=f_0(K)$; in other words, $K$ has no ghost vertices.

Let us show that for each $i, 3\leq i\leq m$ there are edges $\{1,i\}$ and $\{2,i\}$ in $K$.

If there exists an $i, 3\leq i\leq m$ such that both $\{1,i\}, \{2,i\}\notin K$, then the simplicial complex $\del_{\Bier(K)}(j)$ contains a 5-cycle labeled clockwise by $\{1,2,1',i,2'\}$ as a full subcomplex, for any $j\in [m]\setminus\{1,2,i\}$. 
By formula (\ref{fullsubcochain}), the face ring $\Bbbk[\del_{\Bier(K)}(j)]$ has a non-trivial product in Koszul homology and hence is non-Golod, a contradiction. 

If there exists an $i, 3\leq i\leq m$ such that $\{1,i\}\notin K$ and $\{2,i\}\in K$, then the simplicial complex $\del_{\Bier(K)}(1')$ contains a 4-cycle labeled clockwise by $\{1,2,i,2'\}$ as a full subcomplex. 
By formula (\ref{fullsubcochain}), the face ring $\Bbbk[\del_{\Bier(K)}(1')]$ has a non-trivial product in Koszul homology and hence is non-Golod, a contradiction. 

If there exists an $i, 3\leq i\leq m$ such that $\{1,i\}\in K$ and $\{2,i\}\notin K$, then the simplicial complex $\del_{\Bier(K)}(2')$ contains a 4-cycle labeled clockwise by $\{1,2,1',i\}$ as a full subcomplex.
By formula (\ref{fullsubcochain}), the face ring $\Bbbk[\del_{\Bier(K)}(2')]$ has a non-trivial product in Koszul homology and hence is non-Golod, a contradiction. 

Now, let us show that for each $i, j$ such that $3\leq i\neq j\leq m$ there is an edge $\{i,j\}\in K$.

Indeed, if $\{i,j\}\notin K$, then the simplicial complex $\del_{\Bier(K)}(2)$ contains a 4-cycle labeled clockwise by $\{1,i,1',j\}$ as a full subcomplex. By formula (\ref{fullsubcochain}), the face ring $\Bbbk[\del_{\Bier(K)}(2)]$ has a non-trivial product in Koszul homology and hence is non-Golod, a contradiction. 
 
It follows that $K^\vee$ contains no edges and therefore $K^\vee$ is a disjoint set of points. 

Indeed, as was shown above $\sk^1(K)$ is a complete graph on $m$ vertices. Suppose an edge $\{i',j'\}\in K^\vee$. Then the simplicial complex $\del_{\Bier(K)}(v)$ contains a 4-cycle labeled clockwise by $\{i,j,i',j'\}$ as a full subcomplex, for any $v\in [m]\setminus\{i,j\}$.
By formula (\ref{fullsubcochain}), the face ring $\Bbbk[\del_{\Bier(K)}(v)]$ has a non-trivial product in Koszul homology and hence is non-Golod, a contradiction. 

To prove the opposite implication, it remains to observe that if 
$$
V(K^\vee)=\{1',2',\ldots,\ell'\},
$$
then $\Bier(K)$ is obtained from $\partial\Delta_{[m]}$ by stellar subdivisions of the facets $[m]\setminus\{i\}$ by vertices $i'$ for $1\leq i\leq \ell$. We obtain a polytopal sphere whose Bier polytope $P_K$ is of the type $\vc^{\ell}(\Delta^{m-1})$. 

Nerve complexes of truncation polytopes with $\ell\geq 1$ are minimally non-Golod by~\cite{BJ}. This finishes the proof.
\end{proof}

\begin{remark}
Note that the combinatorial type of a truncation polytope $\vc^\ell(\Delta^{m-1})$ with $\ell\geq 3$ and $m\geq 4$ depends on the vertices that we cut. Hence, to be more precise, our notation $P=\vc^\ell(\Delta^{m-1})$ reflects the fact that $P$ has the combinatorial type of a certain concrete truncation polytope obtained from $\Delta^{m-1}$ by a sequence of $\ell$ vertex cuts as described in the proof of the previous theorem.  
\end{remark}

The previous result solves~\cite[Problem 4.8]{LS} in the case of truncation polytopes of the type $\vc^k(\Delta^n)$ with $0\leq k\leq n+1$. As far as we know, the general case remains open.

\section{Polyhedral products and Alexander duality}\label{sec:poly-prod}

Let $(X, A)$ be a pair of spaces and let $K$ be an abstract simplicial complex, $K\subseteq 2^{[m]}$.

\medskip

The associated {\em polyhedral roduct} (or, \emph{generalized moment-angle complex}, or \emph{$K$-power}) is the space
\begin{equation}\label{eqn:PP-1}
(X, A)^K = \mathcal{Z}_K(X, A) = \bigcup_{\sigma\in K} (X, A)^\sigma =
\bigcup_{\sigma\in K}( \prod_{i\in\sigma} X \times
\prod_{j\notin\sigma} A ) \subseteq X^m .
\end{equation}

\begin{figure}[htb]
    \centering
    \includegraphics[scale=0.6]{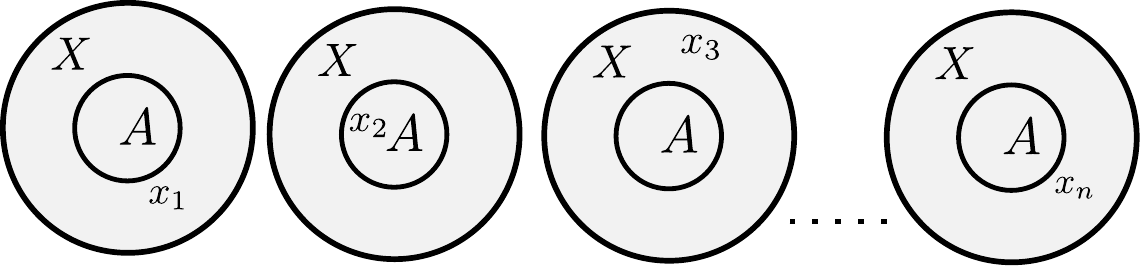}
    \caption{A ``random element'' of $X^m$.}
    \label{fig:darts}
\end{figure}

There is a quite useful, alternative definition (description) of the polyhedral product (\ref{eqn:PP-1}). Given a ``random element''
$x = (x_i)\in X^m$ (or alternatively a function $x : [m] \rightarrow X$), the corresponding ``hitting set'' $H_A(x)$ is the set
\[
 H_A(x) :=  \{i\in [m] \mid x_i\in A \} = x^{-1}(A)\, .
\]
Similarly, the associated complementary ``missing set''  $M_A(x) = H_A(x)^c$ is the set
\[
M_A(x) := \{i\in [m] \mid x_i\notin A \} = x^{-1}(A^c) \, .
\]

\begin{proposition}\label{prop:magic} For each $x\in X^m$
  \[
      x\in \mathcal{Z}_K(X, A) \quad \Leftrightarrow \quad M_A(x) \in K \quad \Leftrightarrow   \quad  H_A(x)\in W := 2^{[m]}\setminus K^\vee \, .
  \]
\end{proposition}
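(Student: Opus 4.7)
The plan is to chase definitions, peeling off the two equivalences one at a time; I do not anticipate a real obstacle, as the statement is essentially a bookkeeping lemma that makes precise how the polyhedral product is controlled by the complement $W$ of the Alexander dual. Once this is in place, it will supply the set-theoretic language needed to identify $\partial Z(K,K^\vee)$ with the cubical Bier sphere in the next section.

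For the first equivalence, I would unfold the union in~\eqref{eqn:PP-1}: the element $x\in X^m$ lies in $\mathcal{Z}_K(X,A)$ precisely when there exists $\sigma\in K$ with $x_j\in A$ for every $j\in [m]\setminus\sigma$ (the condition ``$x_i\in X$ for $i\in\sigma$'' being vacuous, since $A\subseteq X$). This rewrites as $[m]\setminus\sigma\subseteq H_A(x)$, equivalently $M_A(x)\subseteq\sigma$. Since $K$ is closed under taking subsets, the existence of such a $\sigma$ is equivalent to $M_A(x)\in K$ itself, witnessed by the minimal choice $\sigma=M_A(x)$.

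For the second equivalence, I would first record the standard reformulation of Alexander duality: identifying $[m']$ with $[m]$ via $\phi$, one has $\tau\in K^\vee$ if and only if $[m]\setminus\tau\notin K$. This drops out of the paper's definition via minimal non-faces and maximal faces, namely $\tau\in K^\vee$ iff $\tau\subseteq \sigma$ for some $\sigma\in\M(K^\vee)$ iff $[m]\setminus\tau\supseteq\rho$ for some $\rho\in\MF(K)$ iff $[m]\setminus\tau\notin K$. Applying this with $\tau=H_A(x)$, so that $[m]\setminus\tau=M_A(x)$, yields $M_A(x)\in K \Leftrightarrow H_A(x)\notin K^\vee \Leftrightarrow H_A(x)\in W$, completing the chain of equivalences. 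The only item that warrants careful writing is the reformulation of Alexander duality, since the paper's primary definition is phrased in terms of $\MF(K)$ and $\M(K^\vee)$ while the ``complement'' formulation is what the rest of Section~\ref{sec:poly-prod} will repeatedly invoke.
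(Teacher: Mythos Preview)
Your definition-chase is correct on both equivalences. The paper itself offers no proof of this proposition: it is stated as a direct reformulation of the definitions (a ``useful alternative description'' of the polyhedral product) and is immediately invoked in the one-line proof of Proposition~\ref{prop:Grujic-Welker} and again in Section~\ref{sec:meet-1}. Your write-up simply makes explicit what the paper leaves to the reader, including the passage from the $\MF(K)/\max(K^\vee)$ formulation of Alexander duality to the complement formulation $\tau\in K^\vee \Leftrightarrow [m]\setminus\tau\notin K$; there is nothing further to compare.
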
\label{prop:Darts-1}

\subsection{Alexander duality revisited}

The following result will be referred to as the \emph{Alexander duality for generalized moment-angle complexes}.

\begin{proposition}[\cite{GW}] \label{prop:Grujic-Welker}
The polyhedral products $\mathcal{Z}_K(X, A)$ and $\mathcal{Z}_{K^\vee}(X, A^c)$ are disjoint (as subsets of $X^m$). Moreover,
\begin{equation}\label{eqn:Gru-Wel}
       \mathcal{Z}_K(X, A) \cup \mathcal{Z}_{K^\vee}(X, A^c)  =  X^m \, .
\end{equation}
\end{proposition}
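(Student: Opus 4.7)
The plan is to derive both assertions directly from Proposition~\ref{prop:magic} by a simple set-theoretic translation between hitting sets and missing sets as the pair $(X,A)$ is swapped for $(X,A^c)$, combined with the combinatorial definition of the Alexander dual.

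First I would record the elementary identity
\[
   M_{A^c}(x) \;=\; \{i \in [m] \mid x_i \notin A^c\} \;=\; \{i \in [m] \mid x_i \in A\} \;=\; H_A(x),
\]
valid for every $x\in X^m$. Applying Proposition~\ref{prop:magic} to the complex $K$ and pair $(X,A)$ gives the characterization $x\in \mathcal{Z}_K(X,A) \iff H_A(x) \notin K^\vee$, while applying the same proposition to the complex $K^\vee$ and pair $(X,A^c)$ gives $x\in \mathcal{Z}_{K^\vee}(X,A^c) \iff M_{A^c}(x) \in K^\vee \iff H_A(x) \in K^\vee$. (Here I am tacitly using $(K^\vee)^\vee = K$, which follows from $\sigma \in K^\vee \iff [m]\setminus\sigma \notin K$.)

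Once these two characterizations are in place, the conclusion is immediate: for every $x \in X^m$ the subset $H_A(x) \subseteq [m]$ either belongs to $K^\vee$ or it does not, and these two alternatives are precisely what pick out the memberships in $\mathcal{Z}_{K^\vee}(X,A^c)$ and $\mathcal{Z}_K(X,A)$ respectively. Hence the two polyhedral products are disjoint and their union is all of $X^m$, which is exactly \eqref{eqn:Gru-Wel}.

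There is no serious obstacle: the content is packaged inside Proposition~\ref{prop:magic}, and the only step requiring attention is making sure the Alexander duality convention is applied consistently, i.e.\ that $W = 2^{[m]}\setminus K^\vee$ in the magic proposition matches the standard description $\sigma\in K^\vee \iff [m]\setminus\sigma \notin K$. I would include one sentence verifying this to keep the argument self-contained, after which the proposition follows as a one-line corollary.
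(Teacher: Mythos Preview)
Your proposal is correct and essentially identical to the paper's proof: both arguments apply Proposition~\ref{prop:magic} to $(K,A)$ and to $(K^\vee,A^c)$ and then invoke the dichotomy coming from Alexander duality. The only cosmetic difference is that the paper phrases the key observation as ``$M_A(x)$ and $M_{A^c}(x)$ are complementary in $[m]$, hence exactly one of $M_A(x)\in K$, $M_{A^c}(x)\in K^\vee$ holds,'' whereas you phrase it via $M_{A^c}(x)=H_A(x)$ and the hitting-set form of Proposition~\ref{prop:magic}; your parenthetical about $(K^\vee)^\vee=K$ is in fact not needed for the step you take.
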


\proof
For each $x\in X^m$ either $M_A(x) \in K$ or $M_{A^c}(x) \in K^\vee$, but not both!
Indeed, $M_A(x)\cap M_{A^c}(x) = \emptyset$ and $M_A(x)\cup M_{A^c}(x) = [m]$ and, as a consequence, for each $x\in X^m$ precisely
one of the relations $$x\in \mathcal{Z}_K(X, A), \quad x\in  \mathcal{Z}_{K^\vee}(X, A^c)$$ is satisfied. \qed

\section{Canonical cubulations of simplicial complexes}\label{sec:canonical_cubulation}

Here we summarize some basic facts about ``canonical cubulations'' (canonical quadrangulations) of simplicial complexes, needed in Section \ref{sec:meet-1}.
For more details the reader is referred to \cite[Section 2.9]{TT}, see also \cite{Zivaljevic15}.

\begin{figure}[hbt]
\centering
\includegraphics[scale=0.60]{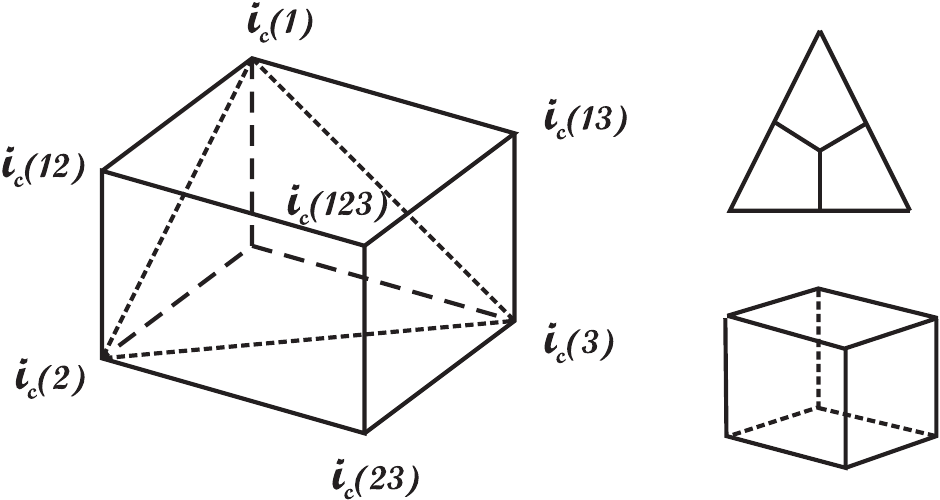}
\caption{The front complex of a cube.} \label{fig:front-and-back}
\end{figure}

Each non-empty subset $I = \{j_1,\ldots, j_k\}\subset [m]$ can be
associated both a vertex $b_I = (1/k)(e_{j_1}+\ldots +e_{j_k})$ of
the barycentric subdivision of the simplex $\Delta^{m-1}={\rm conv}\{e_1,\ldots, e_m\}$ and the vertex $e_I =
e_{j_1}+\ldots +e_{j_k}$ of the standard cube $I^m\subset
\mathbb{R}^m$. The correspondence $b_I \mapsto e_I$ is just the radial projection from the vertex $e_\emptyset = 0$ of the cube. Therefore it is extended to
a piecewise linear embedding $i_c : \Delta^{m-1} \rightarrow I^m$
which turns out to be an isomorphism of $\Delta^{m-1}$ with the
``front region'' $Front(I^m)$ of the cube $I^m$
(Figure~\ref{fig:front-and-back}).

\noindent
(The front region $Front(I^m)$ is formally described as the union of
all facets of $I^m$ which contain the vertex $e_{[m]}=e_1+\ldots
+e_m$.) 
Note that the map $i_c$ is linear (affine) on the top-dimensional simplices of the first
barycentric subdivision of $\Delta^{m-1}$.

\begin{definition}{\rm (Canonical cubulation)}\label{def:canonical_cubulation}
 The front region $Front(I^m)$ is a cubical subcomplex of $I^m$. The corresponding cubical complex induced on $\Delta^{m-1}$ by $i_c$ is
 called the \emph{canonical cubulation} of $\Delta^{m-1}$. More generally, for each abstract simplicial complex $K\subseteq 2^{[m]}$, the map $i_c$
 induces a \emph{canonical cubulation} of its geometric realization $\vert K\vert\subseteq \Delta^{m-1}$.
\end{definition}

Bier spheres $\Bier(K) = B(K,K^\vee)\subset 2^{[m]\cup [m']}$ have two types of vertices, however we can still talk about canonical cubulations. For this we need slightly more general geometric realizations of abstract simplicial complexes.

Given a simplicial complex $L\subseteq 2^S$ and a collection of vectors $\widehat{S}=\{v_i\}_{i\in S}$ labeled by $S$, the corresponding $\widehat{S}$-realization is the
(geometric) complex $\vert L\vert_{\widehat{S}} = \{{\rm conv}(\{v_i\}_{i\in I}) \mid I\in L\}$, provided the vectors $\{v_i\}_{i\in I}$ are linearly independent for each $I\in L$.

\begin{example}{\rm (Standard geometric realization of a Bier sphere) In the case of a Bier sphere $\Bier(K) = B(K,K^\vee)\subset 2^{[m]\cup [m']}$ there is a standard   $\widehat{S}$-realization $\vert \Bier(K)\vert_s$, where
\begin{itemize}
  \item[(i)]  $S = [m]\cup [m']$, and
  \item[(ii)] $v_i = e_i$, for $i\in [m]$, and $v_{j'} = -e_j$, for $j'\in [m']$.
\end{itemize}
Note that $\vert \Bier(K)\vert_s \subset \diamondsuit^m$, where  $\diamondsuit^m := {\rm conv}\{\pm e_i\}_{i\in [m]}$ is the $m$-dimensional cross-polytope.
}
\end{example}

\medskip
Each simplex $\tau = {\rm conv}(\{v_i\}_{i\in I})$ has its own ambient cube $C_\tau$, defined as the Minkowski sum of line segments $C_\tau = \sum_{i\in I}[0,v_i]$.
Then the cubical complex $Front(C_\tau)$, via the corresponding radial map $i_c$, induces a cubulation of $\tau$, and all these cubulations piece together to define a cubulation of $\vert L\vert_{\widehat{S}}$. This cubulation is referred to as the $\widehat{S}$-frontal cubulation of $\vert L\vert_{\widehat{S}}$.

\begin{definition}{\rm (Canonical cubulation of Bier spheres)}\label{def:canonical_cubulation_Bier}
 Let $\Bier(K) = B(K,K^\vee)\subset 2^{[m]\cup [m']}$ be the Bier sphere associated to a simplicial complex $K\subset 2^{[m]}$ and its Alexander dual $K^\vee \subset 2^{[m']}$. Then its \emph{canonical cubulation} $B(K,K^\vee)_{cubic}$ is by definition the $\widehat{S}$-frontal cubulation where $S = [m]\cup [m']$ and $v_i := e_i$ for each $i\in [m]$ and
 $v_j := -e_j$ for each $j\in [m']$.
 \label{def:cubulation-Bier}
\end{definition}

The collection $\mathcal{C} := \{C_\tau \mid \tau\in L\}$ of cubes, described in the $\widehat{S}$-frontal cubulation of $\vert L\vert_{\widehat{S}}$, is itself a cubical complex. The following proposition easily follows from the observation (Figure \ref{fig:front-and-back}) that there exists a natural (radial) homeomorphism $I_c : Cone(\Delta^{m-1}) \rightarrow I^m$.

\begin{proposition}\label{prop:cubical-disc}
There is a topological isomorphism (homeomorphism) of topological spaces
\[
            Cone(\vert L\vert_{\widehat{S}}) \longrightarrow \vert \mathcal{C}\vert := \bigcup_{\tau\in L} C_\tau
\]
where $\vert L\vert_{\widehat{S}}$ is the $\widehat{S}$-realization of the simplicial complex $L$ and
$\vert \mathcal{C}\vert$ the union of all associated cubes $C_\tau$.
\end{proposition}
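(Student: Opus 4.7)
The plan is to upgrade the radial homeomorphism $I_c : \mathrm{Cone}(\Delta^{m-1}) \to I^m$ mentioned in the excerpt from a single model simplex to each individual simplex of $L$, and then to glue the pieces along shared faces. More precisely, I will construct for every $\tau \in L$ a local homeomorphism
\[
\phi_\tau : \mathrm{Cone}(\tau) \longrightarrow C_\tau,
\]
verify that the $\phi_\tau$'s agree on intersections, and assemble the result into a continuous bijection between two compact Hausdorff spaces (hence a homeomorphism).

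For the local step, fix $\tau = {\rm conv}\{v_i\}_{i\in I}$ with $|I|=k$. By the hypothesis on the $\widehat{S}$-realization, $\{v_i\}_{i\in I}$ is linearly independent, so the linear map $L_\tau : \mathbb{R}^k \to \mathrm{span}\{v_i\}_{i\in I}$, $e_i \mapsto v_i$, is an isomorphism onto its image carrying $\Delta^{k-1}$ to $\tau$ and $I^k$ to $C_\tau$. Transporting $I_c$ through $L_\tau$ then produces the desired $\phi_\tau$, which is the radial map from the origin:
\[
\phi_\tau\bigl([x,t]\bigr) \;=\; t\cdot \frac{x}{\max_{i\in I} x_i},
\]
where $x=\sum_{i\in I} x_i v_i \in \tau\setminus\{0\}$ (in barycentric coordinates) and $t\in[0,1]$, with the cone apex going to $0\in C_\tau$. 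The image of this map is exactly $C_\tau$, because $C_\tau = \bigcup_{0\leq t\leq 1} t\cdot \mathrm{Front}(C_\tau)$ and radial projection identifies $\tau$ with $\mathrm{Front}(C_\tau)$.

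The crucial compatibility step is the observation that for a face $\rho \subseteq \tau$ with $\rho = {\rm conv}\{v_i\}_{i\in K}$, $K\subseteq I$, the cube $C_\rho = \sum_{i\in K}[0,v_i]$ is precisely the face of $C_\tau$ on which the coordinates $v_i$ with $i\in I\setminus K$ vanish, and the formula for $\phi_\tau$ restricted to $\mathrm{Cone}(\rho)$ uses only the $x_i$ with $i\in K$; in particular, $\phi_\tau|_{\mathrm{Cone}(\rho)} = \phi_\rho$. Consequently, for any two simplices $\tau,\sigma\in L$ meeting along the common face $\rho = \tau\cap\sigma$, the maps $\phi_\tau$ and $\phi_\sigma$ agree on $\mathrm{Cone}(\rho)$, and their images intersect exactly in $C_\rho = C_\tau \cap C_\sigma$. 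Hence the $\phi_\tau$'s glue to a well-defined continuous map
\[
\Phi : \mathrm{Cone}(\vert L\vert_{\widehat{S}}) \longrightarrow \vert \mathcal{C}\vert,
\]
which is a bijection because each $\phi_\tau$ is a bijection onto $C_\tau$ and the gluing data on both sides (simplicial and cubical) are matched. Assuming $L$ is finite (otherwise one works in each skeleton), both spaces are compact Hausdorff, so $\Phi$ is a homeomorphism.

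The main obstacle, and the only point deserving genuine care, is verifying that the cubes attached to distinct simplices intersect \emph{only} in the cube associated to their common face, i.e.\ $C_\tau\cap C_\sigma = C_{\tau\cap\sigma}$. This is where the linear-independence hypothesis built into the definition of the $\widehat{S}$-realization is essential: it guarantees that each $C_\tau$ is a genuine $|I|$-dimensional combinatorial cube and that the Minkowski sums do not overlap spuriously, exactly as in the standard case $\tau=\Delta^{m-1}$, $C_\tau = I^m$. Once this set-theoretic equality is in hand, the compatibility of the radial formulas above makes the gluing automatic.
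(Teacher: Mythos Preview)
Your argument is exactly the intended one: the paper offers no proof beyond the remark preceding the proposition that it ``easily follows from the observation \dots\ that there exists a natural (radial) homeomorphism $I_c : Cone(\Delta^{m-1}) \rightarrow I^m$,'' and your construction of the local maps $\phi_\tau$ together with the gluing over faces is precisely the natural way to make that remark rigorous.

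One point deserves sharpening. Your justification for the key identity $C_\tau\cap C_\sigma = C_{\tau\cap\sigma}$ invokes ``the linear-independence hypothesis built into the definition of the $\widehat{S}$-realization,'' but that hypothesis constrains only the vectors indexed by a \emph{single} simplex of $L$, whereas the intersection claim concerns the union $I_\tau\cup I_\sigma$, which need not be a face of $L$ (e.g.\ take $L$ to be two disjoint vertices with $v_1=v_2$: each singleton is trivially independent, yet $C_{\{1\}}=C_{\{2\}}\neq\{0\}=C_\varnothing$). What is actually required---and tacitly assumed in the paper, which explicitly calls $\mathcal{C}$ ``a cubical complex''---is that the cubes $\{C_\tau\}_{\tau\in L}$ meet only along common faces. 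In the paper's sole application, the standard realization of $\Bier(K)$ with $v_i=e_i$ and $v_{j'}=-e_j$, this is immediate: each $C_\tau$ is an axis-aligned coordinate box with one corner at the origin, and such boxes visibly intersect in the box indexed by the common face. With that caveat noted, your proof is complete and matches the paper's approach.
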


\section{Bier spheres meet polyhedral products}\label{sec:meet-1}

In this section we establish a natural and transparent link between Bier spheres and polyhedral products.

For convenience, a simplex $A\cup C'\in \Bier(K)$ is recorded as a triple $(A,B,C)$, where $B:= [m]\setminus (A\cup C)$.

\medskip  More explicitly, 
$(A,B,C)\in B(K,K^\vee)$ if and only if 
\begin{itemize}
  \item $[m] = A \uplus B \uplus C$ (disjoint union);
  \item  $A\in K$ and ${C'} := \{{i'}\}_{i\in C}\in K^\vee$;
  \item  $\emptyset \neq B \neq [m]$.
\end{itemize}

\subsection{Canonical cubulation of a Bier sphere and polyhedral products}

Let $I = [0,1]$,  $I_{\leqslant \frac{1}{2}} := [0,\frac{1}{2}], I_{\geqslant \frac{1}{2}} := [\frac{1}{2}, 1], I_{< \frac{1}{2}} := [0,\frac{1}{2})$, etc.
Similarly, for $J = [-1,1]$ let
$J_{\leqslant 0} := [-1,0], J_{\geqslant 0} := [0, 1], J_{< 0} := [-1,0)$, etc.

\medskip
The complex $\mathcal{Z}_K(I, I_{\leqslant \frac{1}{2}})\cap \mathcal{Z}_{K^\vee}(I, I_{\geqslant \frac{1}{2}})$  is clearly a cubical subcomplex of the $m$-cube $I^m$, subdivided into $2^m$ smaller cubes, and our objective is to clarify its structure. This complex is obviously isomorphic (as a cubical complex) to 
\[
\mathcal{Z}_K(J, J_{\leqslant 0})\cap \mathcal{Z}_{K^\vee}(J, J_{\geqslant 0}) =: Z(K,K^\vee)  \, .
\]

In the following proposition the cubical cells of the complex $Z(K,K^\vee)$ are labeled by triples
$(A,B,C)\in B(K,K^\vee)$, corresponding to simplices  in the extended face poset $B(K,K^\vee)^+$ of the Bier sphere $B(K,K^\vee)$.

\begin{proposition}\label{prop:decomp}
\begin{equation}\label{eqn:decomp}
    Z(K,K^\vee) = \bigcup_{(A,B,C)\in B(K,K^\vee)^+}  (J_{\geqslant 0 })^A \times \mbox{ $\{0\}^B$ } \times \mbox{ $(J_{\leqslant 0 })^C$ }
\end{equation}
where $B(K,K^\vee)^+ := B(K,K^\vee) \cup \{(\emptyset, [m], \emptyset)\}$.
\end{proposition}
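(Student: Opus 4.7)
The plan is to characterize membership in $Z(K,K^\vee)$ pointwise via Proposition~\ref{prop:magic}, and then identify the product cells on the right-hand side with the sign-pattern stratification of $J^m$.

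First, I would encode each point $x=(x_1,\ldots,x_m)\in J^m$ by its sign triple
\[
A(x) := \{i : x_i > 0\}, \quad B(x) := \{i : x_i = 0\}, \quad C(x) := \{i : x_i < 0\},
\]
which forms a disjoint decomposition $[m] = A(x) \sqcup B(x) \sqcup C(x)$. Applying Proposition~\ref{prop:magic} to the pair $(J, J_{\leqslant 0})$, the missing set of $x$ relative to $J_{\leqslant 0}$ is precisely $A(x)$; hence $x\in \mathcal{Z}_K(J, J_{\leqslant 0})$ iff $A(x)\in K$. Symmetrically, applying it to $(J, J_{\geqslant 0})$, one has $x\in \mathcal{Z}_{K^\vee}(J, J_{\geqslant 0})$ iff $C(x)'\in K^\vee$. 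Consequently, $x\in Z(K,K^\vee)$ if and only if $(A(x), B(x), C(x))\in B(K,K^\vee)^+$.

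Next I would verify the claimed equality by a double inclusion. For $\supseteq$: if $(A,B,C)\in B(K,K^\vee)^+$ and $x\in (J_{\geqslant 0})^A\times \{0\}^B\times (J_{\leqslant 0})^C$, then $A(x)\subseteq A$ and $C(x)\subseteq C$, so by downward-closure of the simplicial complexes $K$ and $K^\vee$ we have $A(x)\in K$ and $C(x)'\in K^\vee$; the sign characterization above then places $x$ in $Z(K,K^\vee)$. For $\subseteq$: given $x\in Z(K,K^\vee)$, the sign triple $(A(x), B(x), C(x))$ lies in $B(K,K^\vee)^+$ by the previous paragraph, and $x$ visibly lies in the corresponding product cell indexed by this triple.

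The augmented triple $(\emptyset,[m],\emptyset)$ accounts exactly for the origin $0\in J^m$: since $\emptyset$ is a face of both $K$ and $K^\vee$, the origin lies in $Z(K,K^\vee)$ but corresponds to the empty simplex of $\Bier(K)$, which is conventionally omitted from $B(K,K^\vee)$ itself. I do not anticipate a serious obstacle; the only subtlety is book-keeping, namely translating the abstract missing-set criterion of Proposition~\ref{prop:magic} into sign conditions on coordinates for the two complementary one-dimensional pairs $(J, J_{\leqslant 0})$ and $(J, J_{\geqslant 0})$, and then noting that every product cell on the right-hand side is naturally the closure of its relative interior $(J_{> 0})^A\times \{0\}^B\times (J_{< 0})^C$, which is the stratum of points with exact sign pattern $(A,B,C)$.
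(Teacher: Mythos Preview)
Your proof is correct and follows essentially the same approach as the paper: both characterize membership in $Z(K,K^\vee)$ pointwise via Proposition~\ref{prop:magic}, identifying $A(x)=x^{-1}(J_{>0})$ and $C(x)=x^{-1}(J_{<0})$ as the relevant missing sets, and then match the resulting sign pattern to a cell on the right-hand side. The only point worth making explicit is that $B(x)\neq\emptyset$ whenever $x\neq 0$ (needed so that the sign triple actually lies in $B(K,K^\vee)^+$); this follows since $A(x)\sqcup C(x)=[m]$ with $A(x)\in K$ and $C(x)'\in K^\vee$ would force $[m]\setminus A(x)=C(x)$ with $C(x)'\in K^\vee$, contradicting $A(x)\in K$ by the definition of Alexander duality.
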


\proof  Following Proposition \ref{prop:magic}, if $x \in Z(K,K^\vee)$ then
\begin{equation}\label{eqn:Bier-cubic-1}
 A:= x^{-1}(I_{> 0})\in K \quad \mbox{ {and} } C:= x^{-1}(I_{< 0})\in K^\vee
\end{equation}
determines a closed cubical cell on the right hand side of (\ref{eqn:decomp}), where $x$ belongs.

The converse is also true. If $(A,B,C)$ is a closed cubical cell of the smallest dimension, on the right hand side of (\ref{eqn:decomp}), where $x$ belongs, then the conditions (\ref{eqn:Bier-cubic-1}) are satisfied.
\qed

\medskip

In the following proposition we explain the connection of the complex $Z(K,K^\vee)$ and its boundary
$\partial Z(K,K^\vee)$ with the Bier sphere $B(K,K^\vee)$.

\begin{proposition}
 The cubical complex $Z(K,K^\vee)$ is a cubulation of an $(n-1)$-dimensional disc $D^{n-1}$ while its boundary
 $\partial Z(K,K^\vee)$ is the \emph{canonical cubulation}  of the Bier sphere $B(K,K^\vee)$. More explicitly,
\begin{itemize}
   \item The complex $Z(K,K^\vee)$ is precisely the cubical complex $\mathcal{C}$, described in Proposition \ref{prop:cubical-disc}, arising from the \emph{canonical cubulation} of the Bier sphere $B(K,K^\vee)$ ($\widehat{S}$-frontal cubulation) from Definition \ref{def:cubulation-Bier}.
  \item $\partial Z(K,K^\vee)$ is precisely the canonical, $\widehat{S}$-frontal cubulation $B(K,K^\vee)_{cubic}$ of the Bier sphere $B(K,K^\vee)$, described in Definition \ref{def:cubulation-Bier}.
\end{itemize}
\end{proposition}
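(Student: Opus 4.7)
The plan is to read the result directly off Proposition \ref{prop:decomp} by matching its explicit cellular decomposition with the cubical complex $\mathcal{C}$ furnished by Proposition \ref{prop:cubical-disc} applied to $L = B(K,K^\vee)$ in its standard $\widehat{S}$-realization ($v_i = e_i$ for $i \in [m]$, $v_{j'} = -e_j$ for $j' \in [m']$). For each triple $(A,B,C) \in B(K,K^\vee)^+$, set $\tau := A \cup C' \in B(K,K^\vee)^+$. Then the cell
\[
(J_{\geqslant 0})^A \times \{0\}^B \times (J_{\leqslant 0})^C \subseteq J^m
\]
is nothing but the Minkowski sum $\sum_{i\in A}[0,e_i] + \sum_{j\in C}[0,-e_j]$, which is precisely the cube $C_\tau$ from Section \ref{sec:canonical_cubulation}. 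Taking the union over $B(K,K^\vee)^+$ identifies $Z(K,K^\vee)$ with the cubical complex $\mathcal{C}$ on the nose. Proposition \ref{prop:cubical-disc} then supplies a natural radial homeomorphism $\mathrm{Cone}(|B(K,K^\vee)|_s) \to |\mathcal{C}| = Z(K,K^\vee)$; since $B(K,K^\vee)$ is a PL sphere of dimension $m-2$, the source is a topological disc of dimension $m-1$, proving the first assertion.

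For the boundary statement, I plan to split the codimension-one faces of each top-dimensional cube $C_\tau$ (those for which $\tau$ is a facet of the Bier sphere, equivalently $|B|=1$) into two families. A \emph{back} facet of $C_\tau$ is obtained by setting some coordinate $v \in \tau$ to $0$; it is itself a cube $C_{\tau \setminus \{v\}}$ from the decomposition of Proposition \ref{prop:decomp}, and because $B(K,K^\vee)$ is a PL sphere, the codimension-one simplex $\sigma = \tau \setminus \{v\}$ lies in exactly two of its facets, so every back facet is shared by two top cubes and lies in the interior of $Z(K,K^\vee)$. A \emph{front} facet of $C_\tau$ is obtained by fixing some $x_i = +1$ with $i \in A$ or some $x_j = -1$ with $j \in C$; since it has a coordinate in $\{\pm 1\}$, it is not of the form $C_\sigma$ for any $\sigma \in B(K,K^\vee)^+$, so it belongs to only one top cube and therefore to $\partial Z(K,K^\vee)$. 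Consequently $\partial Z(K,K^\vee) = \bigcup_\tau \mathrm{Front}(C_\tau)$, and by Definitions \ref{def:canonical_cubulation} and \ref{def:canonical_cubulation_Bier} the induced cubical structure on this union is exactly the $\widehat{S}$-frontal cubulation $B(K,K^\vee)_{cubic}$.

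The main technical obstacle is the clean bookkeeping between the triple $(A,B,C)$, the coordinate description $(J_{\geqslant 0})^A \times \{0\}^B \times (J_{\leqslant 0})^C$, and the Minkowski-sum description of $C_\tau$, together with verifying that the incidence of back facets really reproduces the simplicial face poset of $B(K,K^\vee)$ so that the PL sphere hypothesis can be leveraged. Once this matching is in place, both claims follow from the unified picture of $\mathcal{C}$ as a cubical refinement of $\mathrm{Cone}(|B(K,K^\vee)|_s)$ in Proposition \ref{prop:cubical-disc}, with the radial projection from the origin providing the PL-homeomorphism from $|B(K,K^\vee)|_s$ onto $\partial Z(K,K^\vee)$ that realizes the canonical frontal cubulation.
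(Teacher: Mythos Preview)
Your proposal is correct and follows essentially the same route as the paper: identify each cell $(J_{\geqslant 0})^A \times \{0\}^B \times (J_{\leqslant 0})^C$ with the cube $C_\tau$ for $\tau = A\cup C'$, and then invoke Proposition~\ref{prop:cubical-disc} together with the fact that $B(K,K^\vee)$ is a sphere to conclude that $Z(K,K^\vee)$ is a disc. The paper's own proof is a two-sentence ``by inspection'' remark making exactly this identification; your front/back facet analysis for $\partial Z(K,K^\vee)$ spells out in detail what the paper leaves implicit, so the only difference is one of explicitness, not of strategy.
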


\proof The proof is essentially by inspection, with the key observation that  $$(J_{\geqslant 0 })^A \times \mbox{ $\{0\}^B$ } \times \mbox{ $(J_{\leqslant 0 })^C$ }$$ is indeed the cube $C_\tau$ associated to the simplex $\tau = (A,B,C)$ in the Bier sphere $B(K,K^\vee)$.

The proof that $Z(K,K^\vee)$ is a disc follows from Proposition \ref{prop:cubical-disc} and the fact that $B(K,K^\vee)$ is a triangulation of a sphere.
\qed

\medskip

\begin{example}
Here is a simple example illustrating the relation
$\partial Z(K,K^\vee) {\cong} B(K,K^\vee)$.

\begin{figure}[htb]
    \centering
    \includegraphics[scale=0.5]{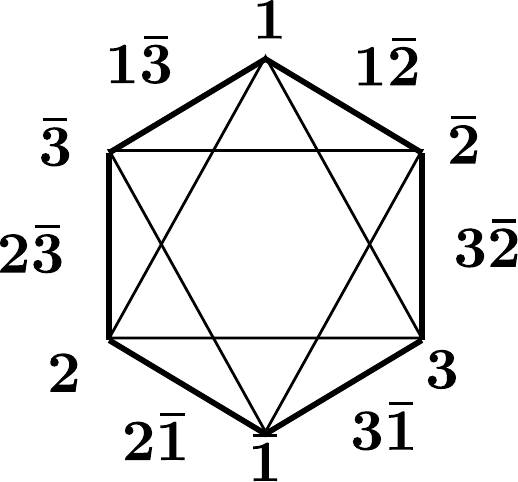}
    \caption{}
    \label{fig:prva}
\end{figure}
$K$ is the (self-dual) $0$-dimensional comp lex with three vertices.
\[
   \max(K) = V(K) = \{\{1\}, \{2\}, \{3\}\},   \quad  \max(K^\vee) = V(K^\vee) = \{\{1'\},\{2'\}, \{3'\}\}
\]
The associated Bier sphere is the hexagon (Figure \ref{fig:prva}),
\begin{equation}\label{eqn:example-1}
    \Bier(K,K^\vee) = \langle \{1,2'\}, \{3,2'\}, \{3,1'\}, \{2,1'\}, \{2,3'\}, \{1,3'\}\rangle \, .
\end{equation}

\noindent
The corresponding cubical complexes $Z(K,K^\vee)$  and $\partial Z(K,K^\vee)$ are depicted in Figure \ref{fig:druga}.

\begin{figure}[htb]
    \centering
    \includegraphics[scale=0.4]{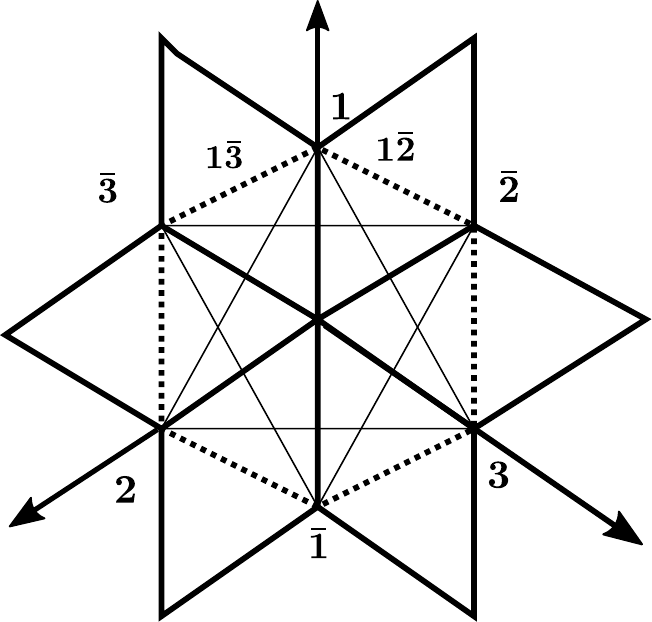}
    \caption{}
    \label{fig:druga}
\end{figure}
In light of the relation
\[
Z(K,K^\vee) = \bigcup_{(A,B,C)\in B(K,K^\vee)^+}  (J_{\geqslant 0})^A \times \mbox{ $\{0\}^B$ } \times \mbox{ $(I_{\leqslant 0})^C$ }
\]
and (\ref{eqn:example-1}), the complex $Z(K,K^\vee)$ is the union of six squares.
\end{example}

\subsection*{Acknowledgements}
The authors are grateful to Taras Panov, Matvey Sergeev, and Ale\v{s} Vavpeti\v{c} for various fruitful discussions, valuable comments and suggestions. 

The authors were supported by the Serbian Ministry of Science, Innovations and Technological Development through the Mathematical Institute of the Serbian Academy of Sciences and Arts.

R. \v Zivaljevi\' c was supported by the Science Fund of the Republic of Serbia, Grant No.\ 7744592, Integrability and Extremal Problems in Mechanics, Geometry and Combinatorics - MEGIC.

\normalsize

\end{document}